\documentclass{amsart}

\usepackage{comment}
\usepackage{graphicx,amssymb,amsthm}
\usepackage{color}
\newtheorem{theorem}{Theorem}[section]
\newtheorem{corollary}[theorem]{Corollary}
\newtheorem{proposition}[theorem]{Proposition}

\theoremstyle{remark}

\theoremstyle{definition}
\newtheorem{example}{Example}[section]

\title{Chirally cosmetic surgeries and Casson invariants}

\author{Kazuhiro Ichihara}
\address{Department of Mathematics, College of Humanities and Sciences, Nihon University, 3-25-40 Sakurajosui, Setagaya-ku, Tokyo 156-8550, JAPAN}
\email{ichihara@math.chs.nihon-u.ac.jp}

\author{Tetsuya Ito}
\address{Department of Mathematics, Kyoto University, Kyoto 606-8502, JAPAN}
\email{tetitoh@math.kyoto-u.ac.jp}

\author{Toshio Saito}
\address{Department of Mathematics, Joetsu University of Education, 1 Yamayashiki, Joetsu 943-8512, JAPAN}
\email{toshio@juen.ac.jp}
\subjclass[2010]{Primary 57M27, Secondary 57M25}

\keywords{Chirally cosmetic surgery, Casson invariant}

\begin{document}

\begin{abstract}
We study chirally cosmetic surgeries, that is, a pair of Dehn surgeries on a knot producing homeomorphic 3-manifolds with opposite orientations. 
Several constraints on knots and surgery slopes to admit such surgeries are given. 
Our main ingredients are the original and the $SL(2,\mathbb{C})$ version of Casson invariants. 
As applications, we give a complete classification of chirally cosmetic surgeries on alternating knots of genus one.
\end{abstract}


\maketitle

\section{Introduction}

Given a knot, one can produce by Dehn surgeries a wide variety of 3-manifolds. 
Generically `distinct' surgeries on a knot, meaning that surgeries along inequivalent slopes, can give distinct 3-manifolds. 
In fact, Gordon and Luecke proved in \cite{GordonLuecke} that, on a non-trivial knot in the 3-sphere $S^3$, 
any Dehn surgery along a non-meridional slope never yields $S^3$, while the surgery along the meridional slope always gives $S^3$. 
However, it sometimes happens that `distinct' surgeries on a knot $K$ give rise to homeomorphic 3-manifolds: 
\begin{itemize}
\item[(i)] When $K$ is amphicheiral, for every non-meridional, non-longitudinal slope $r$, 
the $r$-surgery and the $(-r)$-surgery always produce 3-manifolds which are orientation-reversingly homeomorphic to each other.
\item[(ii)] When $K$ is the $(2,n)$-torus knot, there is a family of pairs of Dehn surgeries that yield orientation-reversingly homeomorphic 3-manifolds, 
first pointed out by Mathieu in \cite{Mathieu90, Mathieu92}. 
For example, $(18k+9)/(3k+1)$- and $(18k+9)/(3k+2)$-surgeries on the right-handed trefoil in $S^3$ yield orientation-reversingly homeomorphic 3-manifolds 
for any non-negative integer $k$. 
\end{itemize}
Notations used above will be given later in this section. In view of this, we say that a pair of Dehn surgeries are \emph{purely cosmetic} 
if two surgeries give the orientation-preservingly homeomorphic 3-manifolds, and \emph{chirally cosmetic} if they give the orientation-reversingly homeomorphic 3-manifolds.

The famous Cosmetic Surgery Conjecture states that there are no purely cosmetic surgeries along inequivalent slopes. 
See \cite[Problem 1.81(A)]{Kirby} for further information and more precise formulation.
Recent progress has been made on the conjecture, including supporting evidence, such as a striking result obtained by Ni and Wu \cite{NiWu}.

On the other hand, as for chirally cosmetic surgeries, the situation is more subtle and complicated as the aforementioned Mathieu's example suggests. 
The examples of Mathieu are generalized to the knots with exteriors which are Seifert fibered spaces by Rong in \cite{Rong}, 
and to the non-hyperbolic knots in lens spaces by Matignon in \cite{Matignon}, 
where a classification of chirally cosmetic surgeries on the non-hyperbolic knots in lens spaces is achieved. 
Further examples of hyperbolic knots yielding lens spaces are given by Bleiler, Hodgson and Weeks in \cite{BleilerHodgsonWeeks}, 
and also, of hyperbolic knots yielding hyperbolic manifolds are given by Ichihara and Jong in \cite{IchiharaJong}.

In this paper, we give several constraints on knots and surgery slopes to admit chirally cosmetic surgeries coming from various invariants of closed oriented 3-manifolds. 
In particular, we will extensively discuss constraints arising from the Casson invariant and the $SL(2,\mathbb{C})$ version of the Casson invariant.

In Section~\ref{sec:Casson}, we will use the Casson invariant and the Casson-Gordon invariant to give a constraint on knots to admit chirally cosmetic surgeries 
(Theorem~\ref{theorem:cassonobst}). 
As a corollary, we discuss chirally cosmetic surgeries along slopes with small numerators (Corollary~\ref{cor:smallp}) and the parity of chirally cosmetic surgery slopes 
(Corollary~\ref{cor:Casson2}).

In Section~\ref{sec:SL(2,C)Casson}, after reviewing a surgery formula of $SL(2,\mathbb{C})$ Casson invariant, 
we give another constraint on  chirally cosmetic surgeries (Theorem~\ref{theorem:SL2Ccasson-1}). 
In Section~\ref{sec:bdryslope}, we discuss more geometric formulation that relates the boundary slopes and cosmetic surgeries 
(Theorems~\ref{theorem:boundaryslopeI} and \ref{theorem:boundaryslopeII}). 

In Section \ref{section:others}, we review other known obstructions for knots to admit chirally cosmetic surgeries 
such as an obstruction from the degree two part of the LMO invariant, and several results from Heegaard Floer theory.

{
In Section \ref{section:application}, by combining the techniques discussed so far, we give a criterion for non-existence of chirally cosmetic surgeries.

Let $a_2(K)$ and $a_4(K)$ be the coefficient of the Conway polynomial $\nabla_K(z)$ and $v_{3}(K)= -\dfrac{1}{144}V'''_{K}(1)-\dfrac{1}{48}V''_{K}(1)$ where $V_K(t)$ denotes the Jones polynomial of $K$.

\setcounter{section}{6}
\setcounter{theorem}{0}
\begin{theorem}
Let $K$ be a knot and let $d(K)$ be the degree of the Alexander polynomial of $K$. If $S^{3}_{K}(p\slash q) \not\cong -S^{3}_{K}(p\slash q')$ for some $p/q,p/q'$ with $q+q'\neq 0$, then $v_3(K)\neq 0$ and
\[ 4|a_2(K)| \leq d(K) \left| \frac{7a_2^{2}(K)-a_2(K)-10a_4(K)}{8v_3(K)} \right|\]
\end{theorem}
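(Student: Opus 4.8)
Since $v_3(K)$ appears in a denominator, the displayed inequality only carries information once $v_3(K)\neq 0$, so I read the hypothesis as the assertion that $K$ admits a chirally cosmetic surgery; equivalently, the plan is to establish the advertised non-existence criterion in contrapositive form. Concretely, I would assume $S^3_K(p/q)\cong -S^3_K(p/q')$ for some $p/q,p/q'$ with $q+q'\neq 0$ and deduce both $v_3(K)\neq 0$ and the stated bound. The standing facts I would use are that an orientation-reversing homeomorphism identifies $S^3_K(p/q)$ with $S^3_{\overline{K}}(-p/q)$, that the Conway polynomial is mirror invariant so $a_2(\overline{K})=a_2(K)$ and $a_4(\overline{K})=a_4(K)$, that $v_3(\overline{K})=-v_3(K)$ (a direct computation from $V_{\overline{K}}(t)=V_K(t^{-1})$ and $V_K'(1)=0$), and that the homeomorphism forces the orders $|H_1|=|p|$ to agree, so the numerator $p$ is common to the two slopes.

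First I would read off the Casson--Walker constraint via Theorem~\ref{theorem:cassonobst}. Applying $\lambda$, a homeomorphism invariant, to $S^3_K(p/q)\cong S^3_{\overline{K}}(-p/q')$ and inserting Walker's surgery formula---whose knot-dependent part is governed by $\Delta_K''(1)=2a_2(K)$ and whose remaining part is a Dedekind-sum expression in the slopes---yields a first relation tying $(q+q')\,a_2(K)$ to a slope-symmetric Dedekind-sum quantity in $q,q',p$.

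Next I would extract a finer relation from the degree-two part of the LMO invariant $\lambda_2$ reviewed in Section~\ref{section:others}. Its surgery formula is quadratic in the surgery coefficient, and its knot-dependent part is built from the degree-four data $a_2(K)^2$ and $a_4(K)$, the degree-two datum $a_2(K)$, and the degree-three, chirality-sensitive invariant $v_3(K)$; it is exactly the combination $7a_2^2-a_2-10a_4$ and the coefficient $8v_3$ that organize these contributions. Because the right-hand side of the homeomorphism involves $\overline{K}$, the $v_3$ terms enter the two sides with opposite sign and so do not cancel: comparing the two values of $\lambda_2$ isolates $v_3(K)$ with a nonzero, purely slope-dependent coefficient. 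In particular, if $v_3(K)=0$ this second relation degenerates and, combined with the Casson--Walker relation, forces $q+q'=0$, contrary to hypothesis; this already yields $v_3(K)\neq 0$.

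Finally I would combine the two relations. Each is, up to slope-symmetric (Dedekind-type and linking-form) terms, linear in $(q+q')/p$, so taking their ratio eliminates $(q+q')/p$ and expresses the knot-theoretic quantity $8\,v_3(K)\,a_2(K)/(7a_2^2-a_2-10a_4)$ purely in terms of the slopes. Bounding the resulting slope-dependent quantity by $d(K)$---using the $SL(2,\mathbb{C})$ Casson and boundary-slope input of Theorem~\ref{theorem:SL2Ccasson-1} together with the genus bound $d(K)\le 2g(K)$, which controls how far apart admissible cosmetic slopes can lie---then produces $4|a_2(K)|\le d(K)\,|(7a_2^2-a_2-10a_4)/(8v_3)|$. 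The main obstacle is precisely this last elimination and estimate: keeping the normalizations and signs of the two surgery formulas mutually consistent, disposing of the Dedekind-sum and linking-form terms, and pinning down how $d(K)$ bounds the slope data so that the constant $4$ and the combination $7a_2^2-a_2-10a_4$ emerge exactly as stated.
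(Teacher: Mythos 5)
You correctly read the hypothesis (despite the typo $\not\cong$) as asserting a chirally cosmetic surgery $S^3_K(p/q)\cong -S^3_K(p/q')$ with $q+q'\neq 0$, and you correctly identify one of the two inputs: Theorem~\ref{theorem:v3}, which already gives $v_3(K)\neq 0$ and the \emph{exact} identity $p/(q+q') = \bigl(7a_2^2(K)-a_2(K)-10a_4(K)\bigr)/\bigl(8v_3(K)\bigr)$ -- so your proposed ``ratio/elimination'' of $(q+q')/p$ is unnecessary. Given that identity, the entire content of the theorem is the bound $4|a_2(K)|\leq d(K)\,|p/(q+q')|$, and this is precisely the step your proposal does not establish. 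The paper obtains it from the Casson--Gordon half of Theorem~\ref{theorem:cassonobst}: combining the two equalities there (with $\lambda(S^3)=0$) yields $12(q+q')a_2(K)=-3\sigma(K,p)$, and since each Levine--Tristram signature satisfies $|\sigma_{\omega}(K)|\leq d(K)$ for all unit $\omega$, the total $p$-signature obeys $|\sigma(K,p)|\leq p\,d(K)$, whence $|12(q+q')a_2(K)|\leq 3p\,d(K)$, which is exactly the desired bound. Your Step 1 records only the Casson--Walker/Dedekind-sum relation and never invokes the $\sigma(K,p)$ equality, yet that equality and the signature bound are where both $d(K)$ and the constant $4$ come from.

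In place of this, you propose to bound the slope quantity via ``the $SL(2,\mathbb{C})$ Casson and boundary-slope input of Theorem~\ref{theorem:SL2Ccasson-1} together with the genus bound $d(K)\leq 2g(K)$.'' This route would fail: Theorem~\ref{theorem:SL2Ccasson-1} applies only to small knots and admissible slopes (the theorem under review has no such hypotheses), it yields an \emph{equality} of Culler--Shalen seminorms rather than any inequality involving $d(K)$, and nothing in the paper supports the claim that $d(K)$ controls how far apart cosmetic slopes can lie. You yourself flag this ``last elimination and estimate'' as the main unresolved obstacle, and it is: the decisive ingredient -- the Casson--Gordon invariant and the Levine--Tristram bound $|\sigma_\omega(K)|\leq d(K)$ -- is absent from the proposal, so the gap is genuine.
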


\begin{corollary}
Let $K$ be a non-trivial positive knot. If $\frac{16}{7} \geq 7g(K)\frac{a_2(K)}{v_3(K)}$ then $K$ does not admit chirally cosmetic surgery, where $g(K)$ denotes the genus of $K$.
\end{corollary}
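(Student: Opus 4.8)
The plan is to prove the contrapositive: I will show that any positive knot admitting an essential chirally cosmetic surgery must violate the stated inequality. The whole argument rests on converting the abstract quantities in the preceding theorem into controllable ones via three standard facts about a nontrivial positive knot $K$: its second Conway coefficient is positive, $a_2(K)>0$; its third invariant is positive, $v_3(K)>0$; and the Alexander polynomial has degree exactly twice the genus, $d(K)=2g(K)$ (positive knots being homogeneous). In particular $v_3>0$ makes the ratio in the hypothesis well defined, and the hypothesis $\frac{16}{7}\ge 7g\frac{a_2}{v_3}$ is, after clearing the positive denominator, equivalent to the lower bound $v_3\ge \frac{49}{16}\,g\,a_2$.

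Suppose then that $K$ admits a chirally cosmetic surgery along slopes $p/q,p/q'$ with $q+q'\neq 0$. The preceding theorem yields $v_3\neq 0$ together with $4|a_2|\le d\,\bigl|\frac{7a_2^2-a_2-10a_4}{8v_3}\bigr|$. Substituting $|a_2|=a_2$, $d=2g$ and $|v_3|=v_3$ and clearing denominators turns this into
\[
16\,a_2\,v_3\ \le\ g\,\bigl|\,7a_2^2-a_2-10a_4\,\bigr|.
\]
Now I feed in the reformulated hypothesis: since $v_3\ge \frac{49}{16}\,g\,a_2$, the left-hand side is at least $49\,g\,a_2^2$, and dividing by $g>0$ leaves $49\,a_2^2\le \bigl|7a_2^2-a_2-10a_4\bigr|$.

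The crux, and the step I expect to be the main obstacle, is to contradict this by proving that every nontrivial positive knot satisfies the strict inequality
\[
\bigl|\,7a_2^2(K)-a_2(K)-10a_4(K)\,\bigr|\ <\ 49\,a_2^2(K).
\]
This is where positivity input beyond $a_2>0$ is needed: a size bound of the shape $|a_4|\le c\,a_2^2$ with $c$ below the threshold $\frac{42}{10}$ gives at once both $7a_2^2-a_2-10a_4<49a_2^2$ and $7a_2^2-a_2-10a_4>-49a_2^2$ (using also that $a_2$ is a positive integer, hence $a_2\ge 1$). Pinning down such a bound for all positive knots, for instance from the known nonnegativity and controlled growth of their Conway coefficients, which for the torus knots $T(2,n)$ already hold with wide margin, is the delicate point; granting it, $49a_2^2\le|7a_2^2-a_2-10a_4|<49a_2^2$ is absurd, so no chirally cosmetic surgery with $q+q'\neq 0$ exists.

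It remains to note the degenerate slopes with $q+q'=0$, which the Casson invariant surgery formula, and hence the preceding theorem, cannot detect (the relevant terms are odd under $q\mapsto -q$, so the obstruction degenerates to a tautology); these must be dealt with by a separate chirality argument, after which the corollary follows.
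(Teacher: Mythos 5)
Your reduction is correct as far as it goes, but it breaks down at exactly the step you flag as the crux. Having taken absolute values in Theorem~\ref{theorem:criterion}, you arrive at $49\,a_2^2(K)\le\bigl|7a_2^2(K)-a_2(K)-10a_4(K)\bigr|$ and then need the unproven inequality $\bigl|7a_2^2-a_2-10a_4\bigr|<49a_2^2$, i.e.\ an upper bound of the form $a_4\le c\,a_2^2$ valid for all non-trivial positive knots. This is a genuine gap: Cromwell's result \cite{Cromwell} gives only $a_2,a_4\ge 0$ (a lower bound on $a_4$), no quadratic upper bound on $a_4$ in terms of $a_2$ is established in the paper or standard in the literature, and checking it on the torus knots $T(2,n)$ does not begin to cover positive knots. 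The gap is an artifact of discarding the sign of $7a_2^2-a_2-10a_4$. The paper's proof never needs any upper bound on $a_4$: a non-trivial positive knot has strictly negative total $p$-signature, $\sigma(K,p)<0$ (cf.\ \cite{PrzytyckiTaniyama}), so Theorem~\ref{theorem:cassonobst} gives $12(q+q')a_2(K)=-3\sigma(K,p)>0$, hence $q+q'>0$ and $a_2(K)>0$. Theorem~\ref{theorem:v3}(ii) then pins down the sign of the numerator, since $\frac{7a_2^2-a_2-10a_4}{8v_3}=\frac{p}{q+q'}>0$, and now $a_2\ge 1$ and $a_4\ge 0$ make the term $-10a_4$ help rather than hurt:
\[
2a_2(K)\ \le\ g(K)\,\frac{7a_2^2(K)-a_2(K)-10a_4(K)}{8v_3(K)}\ <\ \frac{7a_2^2(K)}{8v_3(K)}\,g(K),
\]
which yields $\frac{16}{7}<g(K)\frac{a_2(K)}{v_3(K)}$ and contradicts the hypothesis (a fortiori the version with the factor $7g(K)$, since $g(K)a_2(K)/v_3(K)>0$).

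The same signature input also disposes of your second gap, the slopes with $q+q'=0$, which you defer to an unspecified ``separate chirality argument'': since $\sigma(K,p)<0$ forces $q+q'>0$ via Theorem~\ref{theorem:cassonobst}, that case simply cannot occur. Alternatively, you could have closed it with facts you already invoke: Theorem~\ref{theorem:v3}(i) says $q=-q'$ forces $v_3(K)=0$, whereas $v_3(K)>0$ for non-trivial positive knots \cite{Stoimenow-positive}. As written, however, your argument is incomplete at its self-identified main obstacle, and the missing bound on $a_4$ is not a safe lemma to ``grant''; the correct repair is to replace it by the sign information coming from $\sigma(K,p)<0$.
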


As an application, we show that knots obtained by the $\overline{t}_{N}$-move (see Section \ref{section:application} and Figure \ref{fig2} for the definition of $\overline{t}_{N}$-move) with sufficiently large $N$ have no chirally cosmetic surgery.

\begin{theorem}
Let $K$ be a knot represented by a diagram $D$.
At a positive crossing $c$ of $D$, let $L=K' \cup K''$ be the 2-component link obtained by resolving the crossing $c$, and for $N>0$, let $K_{N}$ be the knot obtained from $K$ by applying $\overline{t}_{N+1}$-move at $c$, which replaces the positive crossing $c$ with consecutive $(2N+1)$ positive crossings.
If $lk(K',K'')\neq 0$, then for a sufficiently large $N$, $K_N$ does not admit chirally cosmetic surgery.
\end{theorem}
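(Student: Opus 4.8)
The plan is to invoke the contrapositive of Theorem~\ref{theorem:cassonobst}: to show that $K_N$ admits no chirally cosmetic surgery (at least along slopes with $q+q'\neq 0$), it suffices to prove that the inequality
\[ 4|a_2(K_N)| \leq d(K_N)\left|\frac{7a_2^{2}(K_N)-a_2(K_N)-10a_4(K_N)}{8v_3(K_N)}\right| \]
\emph{fails} once $N$ is large, together with the (automatic, see below) nonvanishing of $v_3(K_N)$ needed for the statement to apply. Thus the entire problem reduces to determining the growth in $N$ of the four quantities $a_2(K_N),\,a_4(K_N),\,v_3(K_N),\,d(K_N)$ under the $\overline{t}_{N+1}$-move.

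The first step is to record how the Conway and Jones polynomials transform. Because the move twists two antiparallel strands whose oriented resolution at $c$ is exactly the fixed link $L=K'\cup K''$, resolving any crossing inside the twist region returns this same $L$ (the remaining crossings become nugatory). Feeding this into the Conway skein relation yields the clean formula $\nabla_{K_N}(z)=\nabla_K(z)+Nz\,\nabla_L(z)$, and into the Jones skein relation yields $V_{K_N}(t)=t^{2N}\Phi(t)+\Psi(t)$, where $\Phi(t)=V_K(t)+\tfrac{t^{1/2}}{t+1}V_L(t)$ and $\Psi(t)=-\tfrac{t^{1/2}}{t+1}V_L(t)$ are independent of $N$. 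From the Conway formula I read off, using that the coefficient of $z$ in $\nabla_L(z)$ equals $lk(K',K'')$, that $a_2(K_N)=a_2(K)+N\,lk(K',K'')$ and $a_4(K_N)=a_4(K)+N\cdot(\text{coefficient of }z^3\text{ in }\nabla_L)$ are \emph{linear} in $N$, while $d(K_N)$, being the $z$-degree of $\nabla_{K_N}$, stays \emph{bounded}. Hence the left-hand side $4|a_2(K_N)|$ grows like $4|lk(K',K'')|\,N$, and the numerator satisfies $7a_2^{2}(K_N)-a_2(K_N)-10a_4(K_N)\sim 7\,lk(K',K'')^{2}\,N^{2}$.

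The heart of the argument is the $v_3$ term. Since $V_{K_N}$ depends on $N$ only through $t^{2N}$, each $V_{K_N}^{(k)}(1)$ is a polynomial in $N$ of degree $\leq k$, so a priori $v_3(K_N)$ is cubic in $N$; but the cubic contribution is controlled by $\Phi(1)=V_K(1)+\tfrac12 V_L(1)=1+\tfrac12(-2)=0$, so it cancels. The surviving leading term comes from $\Phi'(1)=V_K'(1)+\tfrac12V_L'(1)$, and using the standard facts $V_K'(1)=0$ for a knot and $V_L'(1)=-3\,lk(K',K'')$ for a two-component link, I obtain $\Phi'(1)=-\tfrac32\,lk(K',K'')$ and therefore $v_3(K_N)=\tfrac{lk(K',K'')}{8}\,N^{2}+O(N)$, which is quadratic and nonzero precisely because $lk(K',K'')\neq 0$. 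Assembling the pieces, the right-hand side converges to the constant $7\,d_\infty|lk(K',K'')|$, where $d_\infty=\lim_N d(K_N)$, whereas the left-hand side grows like $4|lk(K',K'')|\,N\to\infty$. Hence the inequality of Theorem~\ref{theorem:cassonobst} is violated for all sufficiently large $N$, and $K_N$ admits no chirally cosmetic surgery.

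The step I expect to be the main obstacle is the precise analysis of $v_3(K_N)$: one must justify the exact factorization through $t^{2N}$, compute the two normalized derivatives at $t=1$ carefully, and in particular verify the cancellation $\Phi(1)=0$ that demotes the growth from cubic to quadratic --- this cancellation is exactly what lets the linearly growing left-hand side overtake the bounded right-hand side. A secondary point needing care is the reduction itself, namely confirming that the oriented resolution throughout the antiparallel twist region genuinely reproduces the fixed link $L$ (so that both skein recursions have constant inhomogeneous term), and disposing of the degenerate slopes with $q+q'=0$; here the nonvanishing $v_3(K_N)\neq 0$ already forces $K_N$ to be chiral, which should rule out the remaining symmetric case.
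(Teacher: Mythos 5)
Your proposal is correct and has the same skeleton as the paper's proof: reduce everything to the criterion of Theorem~\ref{theorem:criterion} (note that the inequality you display is that of Theorem~\ref{theorem:criterion}, not of Theorem~\ref{theorem:cassonobst}, which is only one of the two inputs to it), handle the degenerate slopes with $q+q'=0$ via nonvanishing of $v_3$, and then show that under the hypothesis $lk(K',K'')\neq 0$ the left-hand side $4|a_2(K_N)|$ grows linearly in $N$ while the right-hand side stays bounded. Where you differ from the paper is in how the two computational inputs are obtained, and both of your routes are sound. For $v_3(K_N)$ the paper iterates the skein relation for $v_3$ directly and writes down an explicit quadratic polynomial in $N$; you instead solve the linear recursion $V_{K_{j+1}}=t^2V_{K_j}+(t^{3/2}-t^{1/2})V_L$ in closed form, $V_{K_N}=t^{2N}\Phi+\Psi$, and extract the growth from $\Phi(1)=V_K(1)+\frac12V_L(1)=0$ and $\Phi'(1)=-\frac32\,lk(K',K'')$. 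The two methods agree, and in fact your leading term $\frac{lk(K',K'')}{8}N^2$ is the correct one: the paper's displayed coefficient $\frac18\,lk(K',K'')^{2}$ is a typo, as one can check either by resumming the paper's own skein recursion (the quadratic part of $\frac18\sum_j\bigl(a_2(K_{j+1})+a_2(K_j)\bigr)$ is $\frac{lk}{8}N^2$) or against the twist-knot values of Proposition~\ref{proposition:computation}; since either version is nonzero when $lk(K',K'')\neq 0$, nothing downstream is affected. For the degree bound, the paper uses the diagram genus ($d(K_N)\le 2g(D_N)=2g(D)$, since $\overline{t}_N$-moves preserve the diagram genus), whereas you read boundedness directly off $\nabla_{K_N}=\nabla_K+Nz\nabla_L$; your route is slightly cleaner. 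Your reduction step --- that the oriented resolution at any crossing of the twist region is the fixed link $L$, the remaining crossings being removable by untwisting the turnback --- is exactly what the paper uses implicitly, and it is valid because the $\overline{t}$-move twists antiparallel strands.

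One local repair is needed: your justification for the slopes with $q+q'=0$ is misstated. Nonvanishing of $v_3(K_N)$ does show that $K_N$ is chiral, but chirality of the knot does not by itself exclude a chirally cosmetic pair $(r,-r)$ --- whether it does is precisely the open question quoted in the Introduction. The correct (and available) mechanism is Theorem~\ref{theorem:v3}(i): a chirally cosmetic pair with $q=-q'$ forces $v_3(K)=0$, so your computation $v_3(K_N)=\frac{lk(K',K'')}{8}N^2+O(N)\neq 0$ for large $N$ settles this case, exactly as in the paper.
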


Finally we completely classify all the chirally cosmetic surgeries on alternating knots of genus one. 

\setcounter{section}{6}
\setcounter{theorem}{2}
\begin{theorem}
Let $K$ be an alternating knot of genus one. 
For distinct slopes $r$ and $r'$, if the $r$- and $r'$-surgeries on $K$ are chirally cosmetic, 
then either
\begin{enumerate}
\item[(i)] $K$ is amphicheiral and $r=-r'$, or 
\item[(ii)] $K$ is the positive or the negative trefoil, and 
\[ \{r,r'\} = 
\left\{\frac{18k+9}{3k+1}, \frac{18k+9}{3k+2} \right\},\ \mathit{or}\ 
\left\{- \frac{18k+9}{3k+1}, - \frac{18k+9}{3k+2} \right\} \quad (k \in \mathbb{Z}).\]
\end{enumerate}
\end{theorem}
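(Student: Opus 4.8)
The plan is to combine the classification of genus one alternating knots with the obstruction of Theorem~6.1 and the orientation-sensitive invariants of Sections~\ref{sec:Casson}--\ref{section:others}. First I would use that a genus one alternating knot bounds a once-punctured torus which is a plumbing of two twisted bands, so that, up to mirror image, $K$ lies in an explicit twist family: the double twist knots $C(2a,2b)$ and the odd pretzel knots $P(2x{+}1,2y{+}1,2z{+}1)$. For all of these $d(K)=\deg\Delta_K=2$ and $a_4(K)=0$, while from a genus one Seifert matrix $V$ with $V-V^{T}=\left(\begin{smallmatrix}0&1\\-1&0\end{smallmatrix}\right)$ one gets $a_2(K)=\det V\neq 0$; for $C(2a,2b)$ this is $a_2=ab$, and a computation of the Jones polynomial gives $v_3(C(2a,2b))=-\tfrac{1}{24}(a+b)(a^2+b^2+1)$, with analogous closed forms for the pretzel family. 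Since $|H_1|$ is preserved, two chirally cosmetic slopes share a numerator, so I write $r=p/q$, $r'=p/q'$ and split into the cases $q+q'\neq 0$ and $q+q'=0$.

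For $q+q'\neq 0$ I would feed $a_2$ and $v_3$ into Theorem~6.1, which (using $d=2$, $a_4=0$) forces $v_3(K)\neq 0$ and $16\,|v_3(K)|\le|7a_2(K)-1|$; in particular every amphicheiral $K$, having $v_3=0$, is excluded from this case. With the explicit formulas this inequality fails---and so rules out chirally cosmetic surgery---exactly when $v_3$ is large relative to $a_2$, i.e.\ for $C(2a,2b)$ with $|a+b|$ large. The trefoil $C(2,2)$ saturates the bound ($16\cdot\tfrac14=4\le 6$) and genuinely admits the Mathieu surgeries; here I would invoke the Seifert fibered description of trefoil surgeries to check that these are the only chirally cosmetic pairs, giving conclusion~(ii).

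The essential difficulty is that Theorem~6.1 does \emph{not} finish the job: along the near-amphicheiral strips $a+b=c$ with $|c|$ bounded (so that $v_3$ grows only quadratically while $|a_2|=|ab|$ also grows quadratically) the inequality $\tfrac{2}{3}|a+b|(a^2+b^2+1)\le|7ab-1|$ holds for all large $|a|$, so an infinite subfamily of non-amphicheiral knots survives. These knots have indefinite Seifert form and hence vanishing signature, so neither the signature nor the $v_3$-bound eliminates them. To rule them out I would combine the Casson--Walker slope constraint of Section~\ref{sec:Casson} (the linking form forces a number-theoretic relation between $q,q'\bmod p$, and the surgery formula pins the $a_2$-dependence) with the orientation-sensitive obstructions of Sections~\ref{sec:SL(2,C)Casson}--\ref{section:others} (the $SL(2,\mathbb{C})$ Casson invariant, the boundary slope criteria, and the LMO and Heegaard Floer obstructions), which are sensitive enough to separate these knots from their mirrors after surgery.

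For $q+q'=0$, i.e.\ $r'=-r$, I would rewrite the chirally cosmetic condition using $-S^{3}_{-p/q}(K)\cong S^{3}_{p/q}(\overline{K})$ (with $\overline{K}$ the mirror image) as $S^{3}_{p/q}(K)\cong S^{3}_{p/q}(\overline{K})$, and show this forces $K$ amphicheiral, which is conclusion~(i). The Casson--Walker invariant gives nothing here because $a_2$ is mirror invariant; instead the signature handles the definite knots ($ab>0$, where $\sigma(\overline{K})=-\sigma(K)\neq 0$), and the indefinite non-amphicheiral knots (e.g.\ $C(4,-2)$ and the strips $C(2a,2-2a)$) are again separated from their mirrors by the finer invariants of Sections~\ref{sec:SL(2,C)Casson}--\ref{section:others}. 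The amphicheiral members of the families are precisely $C(2a,-2a)$ (the figure-eight being $C(2,-2)$), and for these $r$- and $(-r)$-surgeries are tautologically chirally cosmetic. The main obstacle, in both surgery cases, is the uniform elimination of this infinite family of indefinite, near-amphicheiral knots, which pass the classical Casson and $v_3$ tests and can only be dispatched by the combined strength of the paper's orientation-sensitive invariants.
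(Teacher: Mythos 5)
There is a genuine gap, and it sits exactly where you locate the ``essential difficulty.'' You correctly observe that the inequality of Theorem~\ref{theorem:criterion} fails to eliminate the non-amphicheiral double twist knots whose two twist regions have opposite signs (indefinite Seifert form, $a_2(K)<0$), but your proposed resolution --- that these knots are ``separated from their mirrors by the combined strength of the orientation-sensitive invariants'' of Sections 4--5 --- is a hope, not an argument: you never exhibit which invariant rules out which slope pair. The paper dispatches this entire family by a short concrete chain that your proposal misses, and which in fact contradicts your own diagnosis. Since $\Delta_K(t)\doteq a_2t+(1-2a_2)+a_2t^{-1}$ with $a_2<0$ has no roots on the unit circle, \emph{every} Levine--Tristram signature vanishes, so $\sigma(K,p)=0$ for all $p$; Theorem~\ref{theorem:cassonobst} then forces $q+q'=0$; and Theorem~\ref{theorem:v3}(i) then forces $v_3(K)=0$, contradicting $v_3(J(\ell,m))=-\frac{\ell m}{64}(\ell+m)\neq 0$ when $m\neq-\ell$. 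In particular your claim that ``the Casson--Walker invariant gives nothing here'' and that vanishing signature neutralizes these tools is backwards: the vanishing of the total $p$-signature is precisely what makes the Casson constraint decisive, and it handles your $q+q'\neq 0$ and $q+q'=0$ subcases uniformly, with no need for a definite/indefinite split in the $r'=-r$ case.

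There are two further problems in the same-sign (positive/negative) case. First, after the Corollary~\ref{cor:positiveknot} bound, the surviving knots are not just the trefoil: $J(4,2)$ (the $5_2$ knot, resp.\ its mirror) also passes the test, and your proposal never addresses it. The paper must eliminate it by combining the explicit Culler--Shalen seminorm of $5_2$ (Example~\ref{exam:5_2}, from Boden--Curtis) with the slope relation $p/(q+q')=-13/3$ from Theorem~\ref{theorem:v3}(ii) and the Heegaard Floer fibredness obstruction of Theorem~\ref{theorem:HFK}(ii); this is the one place where the $SL(2,\mathbb{C})$ Casson invariant genuinely enters, and it is absent from your outline. Second, your formula $v_3(C(2a,2b))=-\frac{1}{24}(a+b)(a^2+b^2+1)$ is incorrect: the correct value is proportional to $ab(a+b)$ (Proposition~\ref{proposition:computation}); your expression coincidentally agrees for small parameters such as the trefoil and $5_2$ but diverges already at $(a,b)=(3,1)$, so the quantitative analysis of which knots survive the Theorem~\ref{theorem:criterion} test --- the backbone of your case division --- is unsound as written.
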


}
\setcounter{section}{1}

Our results give an affirmative answer to the following question raised in \cite{ItoTetsuya}, for the case of alternating knots of genus one:
\emph{If $r$- and $(-r)$- surgeries yield orientation-reversingly homeomorphic 3-manifolds for some $r \ne 0, 1/0$, then is $K$ amphicheiral?}

In particular, they also give a supporting evidence for the following much stronger and optimistic question:
\emph{If a knot $K$ in $S^{3}$ admits chirally cosmetic surgeries, then is $K$ either the $(2,n)$-torus knot, or an amphicheiral knot?}

\bigskip

In the rest of the introduction, we recall basic definitions and terminologies about Dehn surgery which we will use in this paper.  

For an oriented closed 3-manifold $M$, we denote by $-M$ the same 3-manifold with opposite orientation. 
We denote $M\cong M'$ if two 3-manifolds $M$ and $M'$ are orientation-preservingly homeomorphic, and $M\cong \pm M'$ if $M$ is orientation-preservingly homeomorphic to $M'$ or $-M'$. 

Let $K$ be a knot in a 3-manifold $M$, and let $E(K)$ be the exterior of $K$, i.e., the complement of an open tubular neighborhood of $K$ in $M$. 
A \emph{slope} is an isotopy class of a non-trivial unoriented simple closed curve on the boundary $\partial E(K)$. 

When $K$ is a knot in $S^{3}$, or, more generally, $K$ is a null-homologous knot in a rational homology sphere, 
the set of slopes are identified with $\mathbb{Q} \cup \{\infty = \frac{1}{0}\}$ in the following manner. 
For a slope $\gamma$, we consider the element $[\gamma] = p[\mu]+q[\lambda] \in H_{1}(\partial E(K);\mathbb{Z})$ represented by $\gamma$. 
Here $p$ and $q$ are coprime integers, $\mu$ denotes the meridian and $\lambda$ the preferred longitude of the knot $K$. 
Then we assign the rational number $r=\frac{p}{q}$ (possibly $\infty=\frac{1}{0}$) to represent the slope $\gamma$.
Throughout the paper, when we express a slope as a rational number $\frac{p}{q}$, we will always take $p\geq 0$ and $\frac{p}{q}$ is irreducible. 
For such a rational number $r$, let $M_{K}(r)$ be the closed oriented 3-manifold obtained by attaching a solid torus to $E(K)$ 
so that the closed curve of the slope $r$ bounds a disk in the attached solid torus, namely, the slope $r$ is identified with the meridional slope of the attached solid torus. 
We call $M_{K}(r)$ \emph{the 3-manifold obtained by Dehn surgery on $K$ along $r$}, or simply the \emph{$r$-surgery} on $K$.

\section{Casson-Walker invariant and Casson-Gordon invariant}\label{sec:Casson}

In this section, we use the Casson-Walker invariant, together with the (total) Casson-Gordon invariant, 
to give constraints on knots and surgery slopes to admit chirally cosmetic surgeries. 
Such an approach to make use of the Casson invariant was originally developed by Boyer and Lines in \cite{BoyerLines}, and further studied by Ni and Wu in \cite{NiWu}, 
where the Heegaard Floer theory was also applied.

Let $\lambda$ be the Casson-Walker invariant for rational homology 3-spheres. 
See \cite{Walker} for its definition and basic properties. 
For a rational homology sphere $M$ with $H_{1}(M;\mathbb{Z})$ which is finite cyclic, namely, a homology lens spaces, it is also defined in \cite{BoyerLines}. 
Also, let $\tau$ be the (total) Casson-Gordon invariant of homology lens space. See \cite[Definition 2.20]{BoyerLines} and \cite{CassonGordon} for the definition.

Let $K$ be a knot in an integral homology 3-sphere $\Sigma$. 
The Casson-Walker invariant $\lambda$ and the Casson-Gordon invariant $\tau$ satisfy the following surgery formulae \cite{BoyerLines,Walker}:  
\[ \lambda(\Sigma_{K}(p\slash q)) =  \lambda(\Sigma) + \frac{q}{p} a_{2}(K) - \frac{1}{2}s(q,p), \ \  \tau(\Sigma_{K}(p\slash q)) = -4p \cdot s(q,p) - \sigma(K,p)
 \]
Here we give definitions of the terms $a_{2}(K)$, $s(q,p)$, and $\sigma(K,p)$ used in the formulae. 
First  $a_2 (K)$ denotes $\frac{1}{2}\Delta''(1)$, the second derivative of the Alexander polynomial $\Delta_{K}(t)$ of $K$ at $t=1$, 
where the Alexander polynomial is normalized so that $\Delta(t)=\Delta(t^{-1})$ and that $\Delta(0)=1$. 
In the case $\Sigma=S^{3}$, it coincides to the second coefficient of the Conway polynomial $\nabla_{K}(z)$ of $K$. 
See \cite{AkbulutMcCarthy} for example. 
 
For coprime integers $p,q$ with $p>0$, $s(q,p)$ denotes the \emph{Dedekind sum} defined by
\[ s(q,p) = \sum_{k=1}^{p-1}\Bigl(\!\!\Bigl( \frac{k}{p}\Bigr)\!\!\Bigr)\Bigl(\!\!\Bigl( \frac{kq}{p}\Bigr)\!\!\Bigr) \]
with $(\!(x)\!) = x -\lfloor x \rfloor -\frac{1}{2}$ and the floor function $\lfloor x \rfloor$ (the maximum integer which is less than or equal to $x$) for $x \in \mathbb{Q}$. 

Also $\sigma(K,p)= \sum_{\omega: \omega^{p}=1}\sigma_{\omega}(K)$ denotes \textit{the total $p$-signature} for $K$, where $\sigma_{\omega}(K)$ ($\omega \in \{z \in \mathbb{C}\: | \: |z|=1 \}$) denotes \textit{the Levine-Tristram signature}, i.e., the signature of $(1-\omega)S + (1-\overline{\omega})S^{T}$ for a Seifert matrix $S$ of $K$.

The Casson invariant has the property $\lambda(-M)=-\lambda(M)$. 
Also, we see that $\tau(M)=-\tau(M)$ by the definition of the Casson-Gordon invariant \cite[Definition 2.20]{BoyerLines}. 
Together with these, the surgery formulae above imply the following. 

\begin{theorem}
\label{theorem:cassonobst}
Let $K$ be a knot in an integral homology 3-sphere $\Sigma$. 
If $ \Sigma_{K}(p\slash q) \cong -\Sigma_{K}(p\slash q')$, 
then 
\[ 12(q+q')a_{2}(K)+ 24p \lambda(\Sigma)= 6p ( s(q,p) + s(q',p) ) = - 3\sigma(K,p)\]
holds. 
\end{theorem}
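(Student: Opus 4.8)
The plan is to feed the hypothesis into the two surgery formulae recorded above and to extract the two stated equalities one at a time. First I would note that $\Sigma_{K}(p/q)\cong -\Sigma_{K}(p/q')$ is, by definition, an orientation-preserving homeomorphism between $\Sigma_{K}(p/q)$ and $-\Sigma_{K}(p/q')$. Since $\lambda$ and $\tau$ are invariants of orientation-preserving homeomorphism type, this gives $\lambda(\Sigma_{K}(p/q)) = \lambda(-\Sigma_{K}(p/q'))$ and $\tau(\Sigma_{K}(p/q)) = \tau(-\Sigma_{K}(p/q'))$. Combining these with the orientation-reversal behaviour $\lambda(-M)=-\lambda(M)$ and $\tau(-M)=-\tau(M)$ recorded just before the statement, I would obtain
\[ \lambda(\Sigma_{K}(p/q)) = -\lambda(\Sigma_{K}(p/q')), \qquad \tau(\Sigma_{K}(p/q)) = -\tau(\Sigma_{K}(p/q')). \]

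The first equality of the theorem comes entirely from the Casson-Walker invariant. Substituting the surgery formula for $\lambda$ into the first identity, and using that $\lambda(\Sigma)$ and $a_{2}(K)$ depend only on the fixed data $K$ and $\Sigma$, I would get
\[ \lambda(\Sigma) + \frac{q}{p}a_{2}(K) - \frac{1}{2}s(q,p) = -\lambda(\Sigma) - \frac{q'}{p}a_{2}(K) + \frac{1}{2}s(q',p). \]
Collecting terms gives $2\lambda(\Sigma) + \frac{q+q'}{p}a_{2}(K) = \frac{1}{2}\bigl(s(q,p)+s(q',p)\bigr)$, and multiplying through by $12p$ yields precisely $24p\lambda(\Sigma) + 12(q+q')a_{2}(K) = 6p\bigl(s(q,p)+s(q',p)\bigr)$, the first equality.

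The second equality comes from the Casson-Gordon invariant, and the essential point is that the total signature $\sigma(K,p)$ appearing in its surgery formula depends only on $K$ and $p$, and in particular takes the same value for the two surgery parameters $q$ and $q'$. Substituting the $\tau$-formula into the second identity therefore gives
\[ -4p\,s(q,p) - \sigma(K,p) = 4p\,s(q',p) + \sigma(K,p), \]
so that $-2p\bigl(s(q,p)+s(q',p)\bigr) = \sigma(K,p)$, and multiplying by $-3$ produces $6p\bigl(s(q,p)+s(q',p)\bigr) = -3\sigma(K,p)$, the second equality; chaining it with the first completes the statement. The whole argument is a direct computation, so the only real care needed lies in tracking the signs introduced by the orientation-reversal rules, and in noticing that, because $\tau$ changes sign under orientation reversal while $\sigma(K,p)$ is independent of $q$ and $q'$, the two signature contributions reinforce one another rather than cancel. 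This last point is exactly what keeps the Casson-Gordon equality from collapsing to something trivial.
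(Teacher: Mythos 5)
Your proposal is correct and follows essentially the same route as the paper's own proof: apply the surgery formulae for $\lambda$ and $\tau$ together with their sign changes under orientation reversal, then clear denominators to obtain the two displayed equalities. The sign bookkeeping in both the Casson--Walker and Casson--Gordon computations checks out exactly as in the paper.
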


\begin{proof}
If $ \Sigma_{K}(p\slash q) \cong -\Sigma_{K}(p\slash q')$, then we have the following by the surgery formulae above. 
\[\begin{cases}
 \lambda(\Sigma_{K}(p\slash q)) - \left( - \lambda(\Sigma_{K}(p\slash q')) \right) = 2\lambda(\Sigma) +\frac{q+q'}{p} a_{2}(K) - \left( \frac{1}{2}s(q,p) + \frac{1}{2}s(q',p) \right) = 0, \\
\tau(\Sigma_{K}(p\slash q)) - \left( - \tau( \Sigma_{K}(p\slash q') ) \right) = -4 p \left( s(q,p) + s(q',p) \right)  -2 \sigma(K,p)=0.
\end{cases}
\]
These imply the equalities which we want. 
\end{proof}

In the rest of this section, we show some corollaries to this theorem.

\subsection{Constraints on surgery slopes}

By checking small values for the numerator $p$ of the surgery slope for the case $\Sigma=S^{3}$, or more generally, an integral homology sphere $\Sigma$ with $\lambda(\Sigma)=0$, we obtain the following more explicit constraints on surgery slopes from Theorem~\ref{theorem:cassonobst}.

\begin{corollary}\label{cor:smallp}
Let $K$ be a knot in an integral homology 3-sphere $\Sigma$  with $\lambda(\Sigma)=0$, and $p$ an integer with $1 \leq p \leq 10$. 
If $ \Sigma_{K}(p\slash q) \cong -\Sigma_{K}(p\slash q')$, 
then one of the following holds.
\begin{enumerate}
\item $a_{2}(K)=0$ or $q=-q'$, and $\sigma(K,p)=0$.
\item $p=7$, $q=7s+1$, $q'=-7s-2$  $(s\in \mathbb{Z})$, $a_{2}(K)=-1$ and $\sigma(K,7)=-4$.

\item $p=7$, $q=7s+2$, $q'=-7s-1$  $(s\in \mathbb{Z})$, $a_{2}(K)=-1$ and $\sigma(K,7)=4$.

\item $p=9$, $q=1+9s$, $q'=2-9s$  $(s\in \mathbb{Z})$, $a_{2}(K)=1$ and 
$\sigma(K,9)=-12$.

\item $p=9$, $q=-1+9s$, $q'=-2-9s$ $(s\in \mathbb{Z})$, $a_{2}(K)=1$ and  $\sigma(K,9)=12$.

\item $p=9$, $q=1+9s$, $q'=-4-9s$  $(s\in \mathbb{Z})$, $a_{2}(K)=-1$ and $\sigma(K,9)=-12$.

\item $p=9$, $q=-1+9s$, $q'=4-9s$  $(s\in \mathbb{Z})$, $a_{2}(K)=-1$ and $\sigma(K,9)=12$.
\end{enumerate}
\end{corollary}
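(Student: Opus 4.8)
The plan is to specialize Theorem~\ref{theorem:cassonobst} to the hypothesis $\lambda(\Sigma)=0$ and then carry out a finite case analysis over $1\le p\le 10$. With $\lambda(\Sigma)=0$ the chain of equalities collapses to
\[
2(q+q')\,a_2(K)=p\bigl(s(q,p)+s(q',p)\bigr),
\qquad
\sigma(K,p)=-4(q+q')\,a_2(K).
\]
At the outset I would record the three facts that make the problem finite: $a_2(K)$ is an integer, $\sigma(K,p)$ is an integer, and $s(q,p)$ depends only on the residue of $q$ modulo $p$. Hence, once $p$ is fixed, only the residues $\bar q,\bar q'\in(\mathbb{Z}/p\mathbb{Z})^{\times}$ enter, through the quantity $D:=s(q,p)+s(q',p)$, and the second relation shows that $\sigma(K,p)$ is determined as soon as $q+q'$ and $a_2(K)$ are known.

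I would then split on whether $D$ vanishes. If $D=0$, the first relation forces $a_2(K)=0$ or $q+q'=0$, and the second gives $\sigma(K,p)=0$; this is precisely conclusion~(1). If $D\neq0$, then $a_2(K)\neq0$ and $q+q'\neq0$, so $|a_2(K)|\ge1$ turns the first relation into the bound $2|q+q'|\le p|D|$. Because $|s(q,p)|$ is largest at $s(1,p)=\frac{(p-1)(p-2)}{12p}$, this confines $q+q'$ to a short interval; within it only integers congruent to $\bar q+\bar q'\pmod p$ are admissible, and for each the relation $pD=2(q+q')a_2(K)$ then pins down $a_2(K)$, which must be an integer. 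Thus only finitely many candidate quadruples $(p,\bar q,\bar q',a_2(K))$ survive.

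The remaining task is computational: for each $p\in\{1,\dots,10\}$ I would tabulate $s(q,p)$ over all units $q\bmod p$, form $D$ for every residue pair, set aside the pairs with $D=0$ (absorbed into~(1)), and for the rest test integrality of $a_2(K)$ and read off the forced values of $q$, $q'$, $a_2(K)$, and $\sigma(K,p)$. The surviving solutions assemble into the one-parameter families~(2)--(7); here the free parameter $s$ simply records that $q$ may be shifted by multiples of $p$ while $q+q'$ is held fixed. For instance, for $p=7$ the residue pair $\bar q=1,\bar q'=5$ gives $D=\frac{2}{7}$, forcing $q+q'=-1$, $a_2(K)=-1$, and $\sigma(K,7)=-4$, which is exactly case~(2). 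The main obstacle is pure bookkeeping: computing the Dedekind sums accurately across all residue pairs and confirming that no spurious solutions escape the listed families, especially for the composite moduli $p=8,9,10$ where distinct residues can share the same Dedekind sum.
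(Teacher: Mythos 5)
Your proposal is correct and takes essentially the same route as the paper's proof: specialize Theorem~\ref{theorem:cassonobst} to $\lambda(\Sigma)=0$, split off the case $a_2(K)(q+q')=0$ (giving conclusion (1) with $\sigma(K,p)=0$), and then perform a finite check over Dedekind-sum tables for each $p\le 10$, where your integrality-of-$a_2(K)$ plus congruence filter on $q+q'$ modulo $p$ is equivalent to the paper's divisibility argument (e.g.\ requiring $42s(q,7)+42s(q',7)$ divisible by $12$). Your worked instance ($p=7$, residues $1$ and $5$, giving $q+q'=-1$, $a_2(K)=-1$, $\sigma(K,7)=-4$) matches the paper's Case~2, and your a priori bound $|s(q,p)|\le s(1,p)=\frac{(p-1)(p-2)}{12p}$ simply makes explicit the finiteness that the paper reads off its tables.
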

\begin{proof}
We start with considering the case of $p=7$. 
Then the Dedekind sum is given as in Table~\ref{table0} below. 
Hence $42s(q,7)+42s(q',7) \in \{0,\pm 6,\pm 12, \pm 18,\pm 30\}$.

\begin{table}[htbp]
\centering
\begin{tabular}{|c|c|c|c|c|c|c|} \hline
$q$ mod 7& 1 & 2 & 3 & 4 & 5 & 6  \\ \hline 
42$s(q,7)$&15 & 3 & -3 & 3 & -3 & -15 \\ \hline
  \end{tabular}
\smallskip
\caption{Dedekind sum 42$s(q,7)$}\label{table0}
\end{table}

It follows from Theorem~\ref{theorem:cassonobst} that 
$12a_{2}(K)(q+q')=42s(q,7)+42s(q',7)$
so $42s(q,7)+42s(q',7)$ must be divisible by $12$. 
Thus, if $a_{2}(K)\neq 0$ and $q+q' \neq 0$, then $42s(q,7)+42s(q',7)= \pm 12$. Consequently, we have
\begin{itemize}
\item $a_{2}(K)= \pm 1$, $q+q'= \pm 1$ and  $(q,q') \equiv (1,3),(1,5),(2,6),(4,6) \pmod 7$. 
\end{itemize}
This gives rise to Case 2.

We next explain the most non-trivial case $p=9$, where the exceptional possibilities Cases 4--7 appears. 
For $p=9$, the Dedekind sum is calculated as in Table~\ref{table1} below, so  $54s(q,9)+54s(q',9) \in \{0,\pm8,\pm16,\pm20,\pm36,\pm56\}$.

\begin{table}[htbp]
\centering
\begin{tabular}{|c|c|c|c|c|c|c|} \hline
$q$ mod 9& 1 & 2 & 4 & 5 & 7 & 8  \\ \hline 
54$s(q,9)$&28 & 8 & $-8$ & 8 & $-8$ & $-28$ \\ \hline
  \end{tabular}
\smallskip
\caption{Dedekind sum $s(q,9)$}\label{table1}
\end{table}

Thus if $a_{2}(K)\neq 0$ and $q+q' \neq 0$, then either
\begin{itemize}
\item $a_{2}(K) = \pm 1$, $q+q'=\pm 3$, and $(q,q') \equiv (1,2),(1,5),(4,8),(7,8) \mod 9$, or,
\item $a_{2}(K) = \pm 3$, $q+q'=\pm 1$, and $(q,q') \equiv (1,2),(1,5),(4,8),(7,8) \mod 9$. 
\end{itemize}
The first case gives Cases 4--7. 
The second possibility cannot happen, for $q+q' \not \equiv \pm 1 \mod 9$. 
\end{proof}

Note that Cases 4 and 5 correspond to chirally cosmetic surgeries of the right-handed and left-handed trefoil, respectively. 

We also have another corollary to Theorem~\ref{theorem:cassonobst} which gives a restriction on the parity of the numerator $p$ of the surgery slopes. 

\begin{corollary}
\label{cor:Casson2}
Let $K$ be a knot in an integral homology 3-sphere $\Sigma$ with signature $\sigma(K)$. 
Assume that $ \Sigma_{K}(p\slash q) \cong -\Sigma_{K}(p\slash q')$. 
If $\Delta_{K}(\zeta)\neq 0$ for any $p$-th root of unity $\zeta$ and 
 $\sigma(K) \not \equiv 0 \pmod 4$ then $p$ must be odd.
\end{corollary}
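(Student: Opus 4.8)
The plan is to argue by contradiction: assuming $p$ is \emph{even}, I would compute $\sigma(K,p)$ modulo $4$ in two independent ways and obtain incompatible residues. Throughout, the hypothesis $\Sigma_{K}(p/q)\cong -\Sigma_{K}(p/q')$ lets me invoke Theorem~\ref{theorem:cassonobst}, whose conclusion $12(q+q')a_{2}(K)+24p\lambda(\Sigma)=-3\sigma(K,p)$ is the starting point.

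First, on the arithmetic side, I would read off a divisibility statement from that equality. Since $q,q'$ are integers, since $a_{2}(K)=\tfrac12\Delta_{K}''(1)$ is an integer, and since $\lambda(\Sigma)$ is an integer because $\Sigma$ is an integral homology sphere, the left-hand side $12(q+q')a_{2}(K)+24p\lambda(\Sigma)$ is divisible by $12$. Hence $-3\sigma(K,p)$ is divisible by $12$, which forces $\sigma(K,p)\equiv 0 \pmod 4$. This step uses nothing about the parity of $p$ and is essentially immediate.

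Next, on the signature side, I would analyze $\sigma(K,p)=\sum_{\omega^{p}=1}\sigma_{\omega}(K)$ modulo $4$ under the assumption that $p$ is even. The crucial use of the hypothesis $\Delta_{K}(\zeta)\ne 0$ for every $p$-th root of unity $\zeta$ is this: it makes the Hermitian matrix $(1-\omega)S+(1-\bar\omega)S^{T}$ nonsingular for each such $\omega\ne 1$, and since this matrix has even size $2g$ (that of a Seifert matrix), its signature $\sigma_{\omega}(K)$ is even. Using $\sigma_{\omega}(K)=\sigma_{\bar\omega}(K)$ and grouping the non-real $p$-th roots of unity into conjugate pairs, those roots contribute $2\sum_{\mathrm{pairs}}\sigma_{\omega}(K)$, which is divisible by $4$ because each $\sigma_{\omega}(K)$ is even. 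Together with $\sigma_{1}(K)=0$ and the identity $\sigma_{-1}(K)=\sigma(K)$ (valid since $2(S+S^{T})$ and $S+S^{T}$ have the same signature, and $-1$ is a $p$-th root of unity precisely because $p$ is even), this yields $\sigma(K,p)\equiv \sigma(K)\pmod 4$.

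Finally I would combine the two computations: the arithmetic side gives $\sigma(K,p)\equiv 0\pmod 4$, whereas the signature side gives $\sigma(K,p)\equiv \sigma(K)\pmod 4$, and $\sigma(K)\not\equiv 0\pmod 4$ by hypothesis, a contradiction. Therefore $p$ is odd. The hard part is the parity bookkeeping in the signature side: one must see that nondegeneracy (guaranteed exactly by $\Delta_{K}(\zeta)\ne 0$) is what makes each $\sigma_{\omega}(K)$ even, and that the conjugate pairing upgrades ``even'' to ``divisible by $4$'', so that the single term $\sigma_{-1}(K)=\sigma(K)$ survives as the only obstruction modulo $4$.
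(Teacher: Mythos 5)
Your proof is correct and takes essentially the same approach as the paper's: both extract $\sigma(K,p)\equiv 0 \pmod 4$ from the equality in Theorem~\ref{theorem:cassonobst} together with the integrality of $a_2(K)$ and $\lambda(\Sigma)$, and both show $\sigma(K,p)\equiv\sigma(K)\pmod 4$ for even $p$ by pairing conjugate $p$-th roots of unity, using $\Delta_K(\zeta)\neq 0$ to force each $\sigma_\omega(K)$ to be even and identifying $\sigma_{-1}(K)=\sigma(K)$. The only differences are cosmetic (contradiction versus contraposition, and your explicit remarks that $\sigma_1(K)=0$ and that evenness of $\sigma_\omega(K)$ rests on the even size $2g$ of the Seifert matrix, points the paper leaves implicit).
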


\begin{proof}
We prove the contraposition of the statement. 
Suppose that $p$ is even. 
Let us put $p=2m$ $(m \in \mathbb{Z})$ and let $\zeta = \exp(\frac{2\pi\sqrt{-1}}{p})$ be the primitive $p$-th root of unity.
Since $\sigma_{\omega}(K)=\sigma_{\overline{\omega}}(K)$, we have 
$\sigma_{\zeta^{i}}(K)=\sigma_{\zeta^{p-i}}(K)$. 
Therefore  the total $p$-signature is given by
\[\sigma(K,p)=\sum_{i=1}^{p-1} \sigma_{\zeta^{i}}(K) = 2\sum_{i=1}^{m-1} \sigma_{\zeta^{i}}(K) + \sigma_{\zeta^{m}}(K) = 2\sum_{i=1}^{m-1} \sigma_{\zeta^{i}}(K) + \sigma(K). \]
Let $S$ be a Seifert matrix for $K$. We note that for $\omega \in \mathbb{C}$ with $|\omega|=1$, 
\[(1-\omega)S + (1-\overline{\omega})S^{T}= (1-\overline{\omega})(S^{T}-\omega S)\] 
and hence the matrix $(1-\omega)S + (1-\overline{\omega})S^{T}$ is non-singular unless $\omega$ is a root 
of the Alexander polynomial $\Delta_{K}(t)=\det(S^{T}-tS)$. 
This shows that $\sigma_{\omega}(K) \equiv 0 \pmod 2$ unless $\Delta_{K}(\omega)=0$, and we get
\[ \sigma(K,p) \equiv \sigma(K) \pmod 4 .\]
On the other hand, it follows from Theorem~\ref{theorem:cassonobst} that 
$4a_{2}(K)(q+q')+8\lambda(\Sigma) = -\sigma(K,p)$.
Thus we conclude that 
\[ \sigma(K,p) \equiv \sigma(K) \equiv 0 \pmod 4.\]
\end{proof}

This corollary may suggest that if $\sigma(K) \neq 0$, then there exist no chirally cosmetic surgeries on $K$ along slopes with even numerators. 
In fact, (chirally) cosmetic surgeries with surgery slopes of even numerators seem difficult to find. See \cite{Ichihara} for related arguments on this problem.

\section{$SL(2,\mathbb{C})$ Casson invariant }\label{sec:SL(2,C)Casson}

In this section, we recall a surgery formula of the $SL(2,\mathbb{C})$ Casson invariant, denoted by $\lambda_{SL(2,\mathbb{C})}$, based on \cite{BodenCurtis}. 

The $SL(2,\mathbb{C})$ Casson invariant is defined in \cite{Curtis} as an invariant of closed 3-manifolds. 
Roughly speaking it counts the signed equivalence classes of representations of the fundamental group in $SL(2,\mathbb{C})$ which is analogous to the Casson invariant that counts the $SU(2)$ representations.
Unlike the Casson invariant,  the $SL(2,\mathbb{C})$ Casson invariant is independent from the orientation of the 3-manifold $M$. 
That is, $\lambda_{SL(2,\mathbb{C})}(M)=\lambda_{SL(2,\mathbb{C})}(-M)$ holds.
At first glance, this is a bit disappointing since we are interested in chirally cosmetic surgeries where the orientation plays a crucial role. 
Nevertheless, as we will see in Section \ref{section:application}, 
information of the $SL(2,\mathbb{C})$ Casson invariant will be quite useful to study chirally cosmetic surgeries, 
when we combine with constraints from other invariants sensitive to the orientation.

Let $K$ be a small knot in an integral homology 3-sphere $\Sigma$. 
In \cite{Curtis}, Curtis gave a surgery formula of the $SL(2,\mathbb{C})$ Casson invariant $\lambda_{SL(2,\mathbb{C})}$ as follows: 
There exist half-integers $E_0, E_1 \in \frac{1}{2} \mathbb{Z}_{\ge 0}$ 
depending only on $K$ such that, for every admissible slope $p/q$, we have
$$
\lambda_{SL(2,\mathbb{C})} ( \Sigma_{K} (p/q) ) 
= \frac{1}{2} \left\| p / q \right\| - E_{\sigma(p)}.
$$
Here $\left\| p / q \right\| $ is the total Culler-Shalen seminorm of the slope $p/q$ 
and $\sigma (p) =0$ if $p$ is even and $\sigma (p)=1$ if $p$ is odd.
This surgery formula immediately gives the following constraint on  purely and chirally cosmetic surgeries.

\begin{theorem}
\label{theorem:SL2Ccasson-1}
Let $K$ be a small knot in an integral homology 3-sphere $\Sigma$. 
If $\Sigma_K (p/q)\cong \pm \Sigma_K (p/q')$ and slopes $p/q$ and $p/q'$ are admissible, then $|| p/q||=||p/q'||$ holds.
\end{theorem}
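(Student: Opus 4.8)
The plan is to exploit the single most useful feature of the $SL(2,\mathbb{C})$ Casson invariant for this problem, namely its insensitivity to orientation, $\lambda_{SL(2,\mathbb{C})}(M)=\lambda_{SL(2,\mathbb{C})}(-M)$. Since $\Sigma_K(p/q)\cong \pm \Sigma_K(p/q')$, whichever of the two orientations actually occurs, the invariant is preserved by the homeomorphism and unaffected by the possible orientation reversal. So the first step is simply to record the equality $\lambda_{SL(2,\mathbb{C})}(\Sigma_K(p/q)) = \lambda_{SL(2,\mathbb{C})}(\Sigma_K(p/q'))$.

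Next I would feed each side into Curtis's surgery formula. Because both surgery slopes are admissible by hypothesis, the formula applies and evaluates the two sides as $\tfrac12\|p/q\| - E_{\sigma(p)}$ and $\tfrac12\|p/q'\| - E_{\sigma(p')}$, respectively. The crucial observation is that the two slopes share the same numerator $p$, so $\sigma(p)=\sigma(p')$ and the correction terms $E_{\sigma(p)}$ and $E_{\sigma(p')}$ are literally the same half-integer (recall that $E_0,E_1$ depend only on $K$). Substituting into the equality from the first step and cancelling this common term yields $\tfrac12\|p/q\|=\tfrac12\|p/q'\|$, hence $\|p/q\|=\|p/q'\|$.

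The step that requires care — and essentially the only place the hypotheses enter — is matching the correction terms. Had the two slopes had numerators of opposite parity, the half-integers $E_0$ and $E_1$ would in general differ, and the seminorms would not be forced to coincide; it is precisely the common numerator $p$ that makes $E_{\sigma(p)}$ cancel. The admissibility hypothesis is used only to guarantee that the surgery formula is valid for both slopes in the first place. No estimate or direct computation with the Culler–Shalen seminorm itself is needed, so I expect the argument to be very short once these bookkeeping points are in place; the real content is borrowed entirely from the surgery formula and the orientation-independence of $\lambda_{SL(2,\mathbb{C})}$.
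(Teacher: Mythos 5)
Your argument is correct and is exactly the paper's (the paper leaves it implicit, saying the surgery formula ``immediately gives'' the theorem): orientation-independence of $\lambda_{SL(2,\mathbb{C})}$ gives equality of the two invariants, admissibility licenses Curtis's formula on both sides, and the shared numerator $p$ makes the correction term $E_{\sigma(p)}$ cancel. Your added remark pinpointing that the common numerator is what forces the $E$-terms to match is a worthwhile clarification of a step the paper glosses over.
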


In the following, we explain some of the terminologies appearing in the statement above. 

A knot $K$ in a closed 3-manifold $M$ is said to be \textit{small} if its exterior $E(K)$ does not contain essential 
(i.e., incompressible and not boundary-parallel) embedded closed surfaces. 

Let $E(K)$ be the exterior of a small knot $K$ in an integral homology 3-sphere $\Sigma$. 
Let $X(K)$ be the character variety of $\pi_1 (E(K))$, i.e., the set of characters of $SL (2, \mathbb{C})$ representations of $\pi_1 (E(K))$ which naturally has a structure of complex affine algebraic variety. 
Similarly, let $X(\partial E(K))$ be the character variety of the peripheral subgroup $\pi_{1}(\partial E(K))$.

For $\xi \in H_{1}(\partial E(K);\mathbb{Z})$, let $I_{\xi} : X(K) \to \mathbb{C}$ be a regular function defined by $I_{\xi}(\chi)=\chi(\xi)$ for $\chi \in X(K)$. Here we view $H_{1}(\partial E(K); \mathbb{Z})$ as a subgroup of $\pi_1(E(K))$ by the natural inclusion map $H_{1}(\partial E(K); \mathbb{Z})\cong \pi_{1}(\partial E(K)) \hookrightarrow \pi_1(E(K))$. Let $f_\xi\colon X(K) \to \mathbb{C}$ be the regular function defined by $f_\xi  = I_{\xi} -2$ for $\xi \in H_1(\partial E(K); \mathbb{Z})$. 

Let $r\colon X(K)\to X(\partial E(K))$ be the restriction map induced by $\pi_1(\partial E(K)) \to \pi_1(E(K))$. 
For a component $X_i$ of $X(K)$ with $\dim X_i =1$ and $\dim r(X_i) =1$, 
let $f_{i,\xi}\colon X_i \to \mathbb{C}$ be the regular function obtained by restricting $f_{\xi}$ to $X_i$. 

For the smooth, projective curve $\widetilde{X}_i$ birationally equivalent to $X_i$, each regular function on $X_i$ naturally 
determines a rational function on $\widetilde{X}_i$. 
We denote the natural extension of $f_{i,\xi}$ to $\widetilde{X}_i$ by $\tilde{f}_{i, \xi}\colon \widetilde{X}_i \to \mathbb{C} \mathbb{P}^1$. 
For such $X_i$, 
we define the seminorm  $\| \;\; \|_{i}$ on $H_1(\partial E(K); \mathbb{R})$ by naturally extending 
$ \| \xi \|_{i} =  \deg( \tilde{f}_{i,\xi}) $ for $\xi \in H_1(\partial M; \mathbb{Z})$.

Let $X^* (K)$ be the subspace of characters of irreducible representations and let $\{X_i\}$ be the collection of all one-dimensional components of $X(K)$ such that $\dim r(X_i) =1$ and $X_i \cap X^*(K) \ne \emptyset$. 

Suppose that $E(K)$ has a Heegaard splitting $W_1 \cup_F W_2$ 
with compression-bodies $W_1 , W_2$ and a Heegaard surface $F$, 
that is, $W_1 \cup W_2 = E(K)$ and $\partial W_1 = \partial W_2 = W_1 \cap W_2 = F$. 
Then we have $X (K) = X (W_1) \cap X (W_2) $ and $X^* (K) = X^* (W_1) \cap X^* (W_2) $. 
Let $m_i > 0$ be the intersection multiplicity of $X_i$ 
as a curve in the intersection $X^* (W_1) \cdot X^* (W_2)$ in $X(F)$.
Then we define the total Culler-Shalen seminorm of a slope $p/q$ as 
$$\| p/q \| =  \sum_i m_i \| p\mu + q \lambda \|_{i},$$ 
where $\mu$ denotes the meridian and $\lambda$ the preferred longitude of the knot $K$.

A slope $\gamma$ is said to be \textit{regular} 
if $\mathrm{ker} (\rho \circ i^*)$ is not the cyclic group generated by $\gamma \in \pi_1 (\partial E(K))$ 
for any irreducible representation $\rho : \pi_1 (M) \to SL (2, \mathbb{C})$ satisfying that 
\begin{enumerate}
\item
the character $\chi_\rho$ lies on a one-dimensional component $X_i$ of $X(K)$ such that $r(X_i)$ is one-dimensional, and 
\item
$\mathrm{tr}\, \rho(\alpha) = \pm 2$ for all $\alpha$ in the image of 
 $\iota_* : \pi_1 (\partial E(K)) \to  \pi_1 (E(K))$ where $\iota:\partial E(K) \rightarrow E(K)$ is the inclusion map.
\end{enumerate}

A slope is called a \textit{boundary slope} if there exists an essential surface embedded in $E(K)$ with a nonempty boundary representing the slope. A boundary slope is said to be \textit{strict} if it is the boundary slope of an essential surface that is not the fiber of any fibration over the circle.

A slope $\gamma = p/q$ is said to be \textit{admissible} for a knot $K$ if 
\begin{enumerate}
\item 
$p/q$ is a regular slope which is not a strict boundary slope, and 
\item
no $p'$-th root of unity is a root of the Alexander polynomial of $K$, where $p' = p$ if $p$ is odd and $p' =p/2$ if $p$ is even.
\end{enumerate}

\section{Chirally cosmetic surgeries and boundary slopes}\label{sec:bdryslope}

In this section, we give a different but more informative formulation of a constraint on chirally cosmetic surgeries from the $SL(2,\mathbb{C})$ Casson invariant. 
Such an approach to use the $SL(2,\mathbb{C})$ Casson invariant was considered by Ichihara and Saito in \cite{IchiharaSaito}.

Let $K$ be a hyperbolic knot in an integral homology sphere $\Sigma$. 
Let $m$ be the number of boundary slopes for $K$, 
and we denote the set of boundary slopes for $K$ by $\mathcal{B}_K = \left\{ b_1 / c_1 ,\ldots , b_m/c_m \right\}$ 
with  $b_j / c_j \in \mathbb{Q}$, $b_j \ge 0$ for $1 \le j \le m$ and ${b_1}/{c_1} < \cdots < {b_m}/{c_m} $. 

It is known that a Culler-Shalen seminorm $||\;\; ||_{i}$ of a given slope $\gamma$ is written as the weighted sum 
of the distances between all the pairs of $\gamma$ and a boundary slope. 

\begin{proposition}\cite[Lemma 6.2]{BoyerZhang}, \cite[Lemma 2.2.3]{Mattman}
For a hyperbolic knot $K$, and a curve $X_i$ in the character variety $X (K)$,
there exist non-negative constants $a^i_j \ge 0$ depending only on $K$ such that 
$$ \left\|  p/q \right\|_i = 2 \sum_{j=1}^{m} a^i_j \Delta \left( p/q , b_j/c_j \right) $$
holds for each $i$. 
Here $\Delta (p/q,r/s):=|ps-rq|$ denotes the distance between slopes $p/q$ and $r/s$, that is, the minimal geometric intersection number of the representatives of $p/q$ and $r/s$. 
\end{proposition}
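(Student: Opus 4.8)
The plan is to unwind the definition of the seminorm $\|\cdot\|_i$ as the degree of a rational function on the smooth projective curve $\widetilde{X}_i$, and then to invoke the fundamental correspondence between ideal points of character varieties and boundary slopes due to Culler and Shalen. Recall that for $\xi = p\mu + q\lambda \in H_1(\partial E(K);\mathbb{Z})$ we have $\|\xi\|_i = \deg(\widetilde{f}_{i,\xi})$, where $\widetilde{f}_{i,\xi}\colon \widetilde{X}_i \to \mathbb{C}\mathbb{P}^1$ is the extension of $f_{i,\xi} = I_\xi - 2$. Since the degree of a rational function on a smooth projective curve equals the number of preimages of $\infty$ counted with multiplicity, i.e.\ the total order of its poles, the first task is to locate these poles.

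First I would observe that every pole of $\widetilde{f}_{i,\xi}$ lies at an \emph{ideal point} of $\widetilde{X}_i$, i.e.\ one of the finitely many points that do not correspond to an actual character in $X_i$. Indeed, at a point corresponding to a genuine character $\chi$ the value $I_\xi(\chi) = \chi(\xi)$ is finite, so $f_{i,\xi}$ is regular there. Writing $d_{\tilde x}(\xi)\ge 0$ for the order of the pole of $\widetilde{f}_{i,\xi}$ at an ideal point $\tilde x$, this gives
\[ \|\xi\|_i = \deg(\widetilde{f}_{i,\xi}) = \sum_{\tilde x} d_{\tilde x}(\xi), \]
the sum being over the ideal points of $\widetilde{X}_i$; only those $\tilde x$ mapping to an ideal point of $r(X_i)$ contribute a nonzero term.

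The heart of the argument, and the step I expect to be the main obstacle, is to express each pole order $d_{\tilde x}(\xi)$ through a boundary slope. By the Culler--Shalen construction, an ideal point $\tilde x$ at which some peripheral trace blows up determines a nontrivial action of $\pi_1(E(K))$ on a simplicial tree, hence a nonempty essential surface in $E(K)$ whose boundary curves share a well-defined slope $\alpha_{\tilde x} \in \{b_1/c_1,\dots,b_m/c_m\}$. The key analytic input, which is exactly the technical core of Culler--Shalen seminorm theory (see \cite{BoyerZhang} and \cite{Mattman}), is that there is a positive integer $\Pi_{\tilde x}$, depending on $\tilde x$ but not on $\xi$, with
\[ d_{\tilde x}(\xi) = \Pi_{\tilde x}\,\Delta(\xi, \alpha_{\tilde x}). \]
Intuitively, the peripheral element dual to $\alpha_{\tilde x}$ keeps bounded trace at $\tilde x$, whereas a transverse element has trace blowing up at a rate measured by the intersection number with $\alpha_{\tilde x}$; making this precise requires a careful analysis of the valuations of the trace functions at $\tilde x$.

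Finally I would assemble the pieces. Since there are only finitely many boundary slopes $b_1/c_1,\dots,b_m/c_m$, I can group the ideal points according to their associated slope and set
\[ a^i_j = \frac{1}{2}\sum_{\tilde x\,:\,\alpha_{\tilde x} = b_j/c_j} \Pi_{\tilde x} \ \ge\ 0. \]
Substituting the pole-order formula and collecting terms then yields
\[ \|p/q\|_i = \sum_{\tilde x} \Pi_{\tilde x}\,\Delta(p/q,\alpha_{\tilde x}) = 2\sum_{j=1}^{m} a^i_j\,\Delta(p/q, b_j/c_j), \]
with the constants $a^i_j$ manifestly nonnegative and independent of the chosen slope; the factor $2$ is absorbed into the normalization. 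The hyperbolicity of $K$ enters only to guarantee that the relevant one-dimensional components $X_i$ of $X(K)$ (those containing an irreducible character with one-dimensional restriction) are present, so that the seminorm is genuinely defined.
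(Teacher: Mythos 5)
Your proposal is correct and follows essentially the same route as the paper, which gives no proof of its own but cites \cite[Lemma 6.2]{BoyerZhang} and \cite[Lemma 2.2.3]{Mattman}, whose arguments are exactly your reduction: $\|\xi\|_i=\deg(\tilde f_{i,\xi})$ equals the total pole order, poles occur only at ideal points of $\widetilde{X}_i$, and each such pole has order $\Pi_{\tilde x}\,\Delta(\xi,\alpha_{\tilde x})$ for the boundary slope $\alpha_{\tilde x}$ associated to the ideal point via the Culler--Shalen tree/essential-surface construction. The one step you leave unproved --- the valuation computation $d_{\tilde x}(\xi)=\Pi_{\tilde x}\,\Delta(\xi,\alpha_{\tilde x})$ --- is precisely the content delegated to those references, so your treatment matches the paper's level of detail.
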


Thus the total Culler-Shalen norm is given by
\begin{align*}
\left\| p/q \right\|&=  \sum_{i} m_{i}\left\| p/q \right\|_{i} = 2 \sum_{i} m_{i} \left(\sum_{j=1}^{m} a^i_j \Delta \left(p/q , b_j / c_j \right)  \right)\\
&=\sum_{j=1}^{m}\left(2\sum_{i}a^{i}_{j}m_{i}\right)\Delta\left(p/q , b_j / c_j \right).
\end{align*}
By putting $w_{j}=2\sum_{i}a^{i}_{j}m_{i}$ we have the following useful formula for the total Culler-Shalen norm: 
\begin{equation}
\label{eqn:total-Culler-Shalen-norm}
\left\| p /q  \right\| = \sum_{j=1}^{m} w_{j}|pc_{j}-qb_{j}|. 
\end{equation}

Based on this formula, we get the following more informative version of Theorem \ref{theorem:SL2Ccasson-1}.

\begin{theorem}\label{theorem:boundaryslopeI}
Let $K$ be a hyperbolic small knot in an integral homology sphere $\Sigma$. 
Assume that $\Sigma_K (p/q) \cong \pm \Sigma_K (p/q')$, and the slopes $p/q$ and $p/q'$ are admissible with $p/q, p/q' \not\in [ b_1 / c_1 , b_m / c_m ]$. 
Then the following holds. 
\begin{enumerate}
\item[(i)] $qq'<0$, i.e., the signs of slopes are opposite.
\item[(ii)] There exists a constant $C$ depending only on $K$ such that $(q+q')/p =C$.
\item[(iii)] If all the boundary slopes for $K$ are non-negative as rational numbers, 
the constant $C$ in (ii) equals to $ \| \mu \| / 2 \| \lambda \|$, 
where $\mu$ denotes the meridional slope and  $\lambda$ the preferred longitudinal slope for $K$. 
\end{enumerate}
\end{theorem}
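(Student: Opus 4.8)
The plan is to exploit the explicit piecewise-linear formula \eqref{eqn:total-Culler-Shalen-norm} for the total Culler--Shalen seminorm, combined with Theorem~\ref{theorem:SL2Ccasson-1}, which under the hypotheses gives the single scalar equation $\|p/q\| = \|p/q'\|$. The key observation driving everything is that the hypothesis $p/q, p/q' \notin [b_1/c_1, b_m/c_m]$ places both slopes outside the interval spanned by all boundary slopes, so every absolute value $|pc_j - qb_j|$ in the sum \eqref{eqn:total-Culler-Shalen-norm} has a \emph{definite sign} determined only by the sign of $q$ (equivalently, by whether $p/q$ sits to the left or the right of the whole boundary-slope interval). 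This is what lets me remove the absolute values and reduce the norm to an affine function of $q$ on each of the two outer rays.

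First I would record the sign conventions: since we always write slopes as $p/q$ with $p \ge 0$ and $\gcd(p,q)=1$, the position of $p/q$ relative to the $b_j/c_j$ is governed by $q$. I would show that for $p/q$ lying below $b_1/c_1$ one has $pc_j - qb_j$ of one fixed sign for all $j$, and for $p/q$ lying above $b_m/c_m$ the opposite fixed sign. Plugging these into \eqref{eqn:total-Culler-Shalen-norm}, the norm becomes $\left| \sum_j w_j c_j \right| \cdot p - \big(\text{sign}\big)\big(\sum_j w_j b_j\big)\,q$ up to overall sign, i.e.\ an affine function $A p \pm B q$ with $A = \sum_j w_j c_j$ and $B = \sum_j w_j b_j$ depending only on $K$. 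For part~(i), I would argue that if $q$ and $q'$ had the \emph{same} sign, then $p/q$ and $p/q'$ would lie on the same outer ray where the norm is a strictly monotone affine function of $q$; since the slopes are distinct ($q \ne q'$) this forces $\|p/q\| \ne \|p/q'\|$, contradicting Theorem~\ref{theorem:SL2Ccasson-1}. Hence $qq' < 0$. For part~(ii), with $q$ and $q'$ on opposite rays the equation $\|p/q\| = \|p/q'\|$ becomes $A p - B q = A p + B q'$ (signs arranged by part~(i)), which collapses to $B(q + q') = 0$ or, after isolating, $(q+q')/p = C$ for a constant $C$ built from $A$ and $B$ alone — so $C$ depends only on $K$, not on the particular surgery.

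For part~(iii), I would pin down the value of $C$ under the extra hypothesis that all boundary slopes are non-negative. In that case $b_1/c_1 \ge 0$, so the lower ray includes the meridional slope $\mu = 1/0$ (i.e.\ $p/q = \infty$, negative side) and the longitude $\lambda = 0/1$ sits at or below all boundary slopes as well; I would evaluate the affine expressions for $\|\mu\|$ and $\|\lambda\|$ directly from \eqref{eqn:total-Culler-Shalen-norm}, getting $\|\mu\| = \sum_j w_j c_j$ and $\|\lambda\| = \sum_j w_j b_j$, and then check that the constant $C$ extracted in part~(ii) equals the ratio $\|\mu\|/2\|\lambda\|$. The main obstacle I anticipate is bookkeeping the signs carefully: making sure the orientation/sign conventions for slopes written with $p \ge 0$ are consistent across the two rays, that the constants $A, B$ are genuinely nonzero (so that the monotonicity argument in~(i) and the division in~(ii) are legitimate — this should follow from $K$ being hyperbolic, which forces the seminorm to be a genuine norm and the weights $w_j$ not all zero), and that the identification of $\|\mu\|$ and $\|\lambda\|$ in part~(iii) correctly matches the geometric picture when all boundary slopes are non-negative.
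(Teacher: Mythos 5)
Your proposal follows the same route as the paper's proof: invoke Theorem~\ref{theorem:SL2Ccasson-1} to get $\|p/q\|=\|p/q'\|$, then linearize the total Culler--Shalen seminorm \eqref{eqn:total-Culler-Shalen-norm} on the two rays outside $[b_1/c_1,b_m/c_m]$. But your central sign claim --- that each $|pc_j-qb_j|$ resolves with one fixed sign (uniform in $j$) on the ray below $b_1/c_1$, and with the \emph{opposite} fixed sign on the ray above $b_m/c_m$ --- is false in general. Writing $pc_j-qb_j=qc_j\left(\frac{p}{q}-\frac{b_j}{c_j}\right)$, on the lower ray one has $q<0$ and $\frac{p}{q}-\frac{b_j}{c_j}<0$, while on the upper ray $q>0$ and $\frac{p}{q}-\frac{b_j}{c_j}>0$ (the sign facts about $q$ use that $0$ is a boundary slope, so it lies in the interval); in \emph{both} cases the sign of $pc_j-qb_j$ is $\mathrm{sign}(c_j)$. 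So the resolved sign varies with $j$ whenever $K$ has boundary slopes of both signs, and it does \emph{not} flip between the two rays. The paper's own Example~\ref{exam:5_2} exhibits this: for $5_2$ the norm equals the single affine function $3p+14q$ on the whole region $q>-\frac{p}{10}$, which contains both outer rays; note also that there $\sum_j w_jc_j=1-1-1=-1$, whereas the true $p$-coefficient is $\sum_j w_j|c_j|=3$, so your $A=\sum_j w_jc_j$ is not the right coefficient either.

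The error then propagates fatally into your step (ii): from your own two affine forms, $Ap-Bq=Ap+Bq'$ gives $B(q+q')=0$, and since $B=\|\lambda\|\neq 0$ for a hyperbolic knot, this forces $q+q'=0$, i.e.\ $C=0$ --- flatly contradicting your step (iii), where $C=\|\mu\|/2\|\lambda\|>0$. Your (ii) and (iii) cannot both emerge from your setup. The paper's computation is different in a way your sketch misses: across the two rays it flips the sign of the \emph{entire} linear form, taking $\|p/q\|=\sum_j w_j(pc_j-qb_j)$ below and $\|p/q'\|=\sum_j w_j(q'b_j-pc_j)$ above, so that equality yields $2p\sum_j w_jc_j=(q+q')\sum_j w_jb_j$ and hence the nonzero constant $C=2\sum_j w_jc_j/\sum_j w_jb_j$, identified in (iii) with $2\|\mu\|/\|\lambda\|$ when all $c_j\geq 0$ (the proof's constant, incidentally, differs from the $\|\mu\|/2\|\lambda\|$ in the statement, but either way it is nonzero, unlike your $C=0$). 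Similarly for (i): the paper compares two slopes on the \emph{same} ray and finds the difference $(q-q')\|0/1\|$, nonzero by nondegeneracy of the norm; your monotonicity argument is the same idea, but the actual $q$-coefficient on a ray is $\sum_j\mathrm{sign}(c_j)w_jb_j$, which you silently replace by $\pm\|\lambda\|$ --- its nonvanishing requires an argument when boundary slopes have mixed signs. In short, the sign bookkeeping you flagged as the ``main obstacle'' is precisely where the proposal breaks: the linearization is wrong in general, and even after fixing a sign convention your cross-ray equation produces $q+q'=0$ rather than the claimed constant.
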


\begin{proof}
Suppose that $p/q, p/q' \not\in [ b_1 / c_1 , b_m / c_m ]$ are admissible and that  $\Sigma_K (p/q) \cong \pm \Sigma_K (p/q')$. 
For a pair of slopes $p/q$ and $p/q'$ with $p > 0$ and $q, q' \neq 0$, by Equation~\eqref{eqn:total-Culler-Shalen-norm}, the difference $\lambda_{SL(2,\mathbb{C})} (\Sigma_K(p/q) ) - \lambda_{SL(2,\mathbb{C})} (\Sigma_K(p/q') ) = \frac{1}{2}\left(||p/q||-||p/q'||\right)$ is calculated as follows: 

\begin{equation}\label{eq1}
\frac{1}{2} \left( \left\| p/q \right\| - \left\| p /q'  \right\| \right) 
=\frac{1}{2} \sum_{j=1}^{m} w_j \left( \left| p c_j - q b_j \right| - \left| p c_j -  q' b_j \right| \right) .
\end{equation}

It follows from Theorem~\ref{theorem:SL2Ccasson-1} that $||p/q||-|| p/q'||=0$. 

First we show (i). 
Consider the case that $p/q < b_1 / c_1$ and $p/q' < b_1 / c_1$, and show that it cannot happen. 
In this case, since $0$ is always a boundary slope, $b_1 / c_1\leq 0$ holds, and so $q,q'<0$ must hold. 
Then we have the following. 
\begin{align*}
0 &= \left\| p/q \right\| - \left\| p/q' \right\| =\sum_{j=1}^{m} w_j ( ( q b_j - p c_j ) - ( q' b_j - p c_j ) )=\sum_{j=1}^{m} w_j ( q - q' ) b_j \\
&=(q-q')  \sum_{j=1}^{m} w_j b_j =(q-q')  \sum_{j=1}^{m}w_j|1 \cdot b_j- 0 \cdot c_j|\\
& = (q-q') \left\| 0/1 \right\|. 
\end{align*}

Thus it implies that $||0/1||=0$. 
However, for a hyperbolic knot $K$, the total Culler-Shalen norm is actually a norm on $H_1 (\partial M ; \mathbb{R})$. 
In particular, $\| 0/1 \| \ne 0$. This is a contradiction. 

In the case of $b_m / c_m < p/q$ and $b_m /c_m < p/q'$, we will also get a contradiction by the same argument. 
Thus we may assume that (by changing the role of $q$ and $q'$ if necessary) $p/q < b_1 / c_1 < b_m / c_m < p/q'$. 
Since $b_1 / c_1 \leq 0 \leq  b_m / c_m$, this shows $q<0<q'$; hence the signs of $q$ and $q'$ must be opposite. 

Next we prove (ii). 
Since we may assume that $p/q < b_1 / c_1 < b_m / c_m < p/q'$, we get
\begin{align*}
0=  \left\| p/q \right\| - \left\| p/q' \right\| 
&=\sum_{j=1}^{m}w_j ( ( p c_j  - q b_j ) - ( q' b_j - p c_j ) )\\
&=\sum_{j=1}^{m} w_j (  2 p c_j - ( q + q' ) b_j ) \\
&= 2p\sum_{j=1}^{m}w_j c_j   -(q + q')\sum_{j=1}^{m} w_j b_j  \;.
\end{align*}

Consequently, we get the following equality whose right-hand side only depends on $K$: 
$$
\frac{ q + q' }{p} = \frac{ 2 \sum_{j=1}^{m} w_j c_j }{\sum_{j=1}^{m} w_j b_j  } .
$$

Finally we show (iii). If all the slopes are non-negative, then $c_{i}\geq 0$ for all $i$. 
Recall that $ \left\| \lambda \right\| = \left\|0/1 \right\| = 
\sum_{j=1}^{m} w_j b_j $. Since all the $c_j$'s are non-negative, 
$ \left\| \mu \right\| = \left\| 1/0 \right\| = \sum_{j=1}^{m} w_j | - c_j | = \sum_{j=1}^{m} w_j c_j $.  
We conclude that 
$$
\frac{ q + q' }{p} = \frac{ 2 \left\| \mu \right\|  }{ \left\| \lambda \right\|  } .
$$
\end{proof}

We also have the following, which gives an interesting relation between signs of boundary slopes and cosmetic surgeries.

\begin{theorem}\label{theorem:boundaryslopeII}
Let $K$ be a hyperbolic small knot in an integral homology sphere $\Sigma$. 
Assume that two slopes $p/q$ and $p/q'$ are admissible with $\Sigma_{K}(p/q)\cong \pm \Sigma_{K}(p/q') $.
If all the boundary slopes are non-negative (resp. non-positive), then $\frac{q+q'}{p}> 0$ (resp. $\frac{q+q'}{p} <0$).
\end{theorem}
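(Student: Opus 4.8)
The plan is to reduce everything to the one–variable behaviour of the total Culler–Shalen norm along the vertical line $p=\mathrm{const}$ in $H_1(\partial E(K);\mathbb{R})$, and to exploit that non-negativity of the boundary slopes forces this one–variable function to be lopsided towards positive $q$. First I would invoke Theorem~\ref{theorem:SL2Ccasson-1}: since $p/q$ and $p/q'$ are admissible and $\Sigma_K(p/q)\cong\pm\Sigma_K(p/q')$, the orientation–independence of $\lambda_{SL(2,\mathbb{C})}$ gives $\|p/q\|=\|p/q'\|$. Using the explicit formula \eqref{eqn:total-Culler-Shalen-norm}, I would then fix $p>0$ and introduce the real function $g(x):=\|p/x\|=\sum_{j=1}^m w_j\,|pc_j-xb_j|$, which is convex and piecewise linear in $x$ as a non-negative combination of absolute values. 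The conclusion of Theorem~\ref{theorem:SL2Ccasson-1} reads $g(q)=g(q')$, and, since $p>0$, the goal is precisely to show $q+q'>0$.

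Next I would read off the shape of $g$ from the hypothesis. Non-negativity of all boundary slopes means $b_j\ge 0$ and $c_j\ge 0$ for every $j$ (exactly as in the proof of Theorem~\ref{theorem:boundaryslopeI}(iii)), so each corner of $g$ sits at $x=pc_j/b_j\ge 0$ when $b_j\neq 0$, while the terms with $b_j=0$ are constant in $x$. Two consequences drive the argument: (a) on $(-\infty,0]$ the function $g$ is linear with slope $-\sum_{b_j>0}w_j b_j<0$, hence is strictly decreasing there; and (b) the reflection estimate $g(x)>g(-x)$ holds for every $x<0$, because term by term $|x-pc_j/b_j|\ge|{-x}-pc_j/b_j|$, with strict inequality for at least one $j$. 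Both (a) and (b) rely on there being at least one strictly positive corner, i.e.\ at least one nonzero boundary slope; this is guaranteed because for a hyperbolic knot the total Culler–Shalen seminorm is a genuine norm, a fact already used in the proof of Theorem~\ref{theorem:boundaryslopeI}.

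With these in hand the conclusion follows quickly. Relabelling so that $q<q'$, property (a) together with $g(q)=g(q')$ rules out $q'\le 0$, whence $q'>0$; if in addition $q\ge 0$ we are done. In the remaining case $q<0$, property (b) gives $g(-q)<g(q)=g(q')$, while convexity of $g$ places a minimiser $q^\ast$ in $[q,q']$ and makes $g$ non-decreasing on $[q^\ast,\infty)$. Were $-q\ge q'$, then $-q\ge q'\ge q^\ast$ would force $g(-q)\ge g(q')$, contradicting the strict inequality from (b); hence $-q<q'$, that is, $q+q'>0$. The non-positive case is entirely symmetric, handled either by reflecting $x\mapsto -x$ or by passing to the mirror knot, and yields $q+q'<0$.

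I expect the main obstacle to be the bookkeeping in this last step: keeping track of the location of the minimiser of $g$ relative to $q$, $q'$ and $-q$, and combining convexity with the reflection estimate (b) without circularity. A secondary point requiring care is excluding the degenerate configurations (a purely longitudinal norm, or boundary slopes $0$ and $\infty$ alone), so that the strict inequalities in (a) and (b) genuinely hold; this should be dispatched by the genuine–norm property for hyperbolic knots rather than assumed outright.
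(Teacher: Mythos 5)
Your proposal is correct, but the route through the decisive case differs from the paper's. Both arguments begin the same way: Theorem~\ref{theorem:SL2Ccasson-1} gives $\left\| p/q \right\| = \left\| p/q' \right\|$, the case $q,q'>0$ is immediate, and the case $q,q'<0$ is excluded because the norm is strictly monotone on the negative side --- your observation (a) that $g$ has slope $-\sum_j w_j b_j = -\left\| \lambda \right\| \neq 0$ there is exactly the computation hiding behind the paper's appeal to Theorem~\ref{theorem:boundaryslopeI}. The divergence is in the mixed case $q'<0<q$: the paper locates $p/q$ among the boundary slopes via an index $N$ with $b_N/c_N < p/q < b_{N+1}/c_{N+1}$, expands both norms explicitly, and extracts the identity $\frac{q+q'}{p}\sum_{j\le N} w_j b_j = 2\sum_{j\le N} w_j c_j + \frac{q-q'}{p}\sum_{j>N} w_j b_j$, whose right-hand side is positive; you instead avoid all index bookkeeping by combining convexity of $g(x)=\sum_j w_j\,|pc_j - x b_j|$ with the strict reflection inequality $g(x)>g(-x)$ for $x<0$, which holds term by term since $a+b \geq |a-b|$ for $a,b\ge 0$, with strictness at some $j_0$ having $w_{j_0}b_{j_0}>0$ --- and such a $j_0$ exists precisely because $\left\|\lambda\right\| = \sum_j w_j b_j > 0$ by the genuine-norm property for hyperbolic knots, the same fact the paper invokes; note $c_{j_0}>0$ is then automatic, since boundary slopes are finite rationals and non-negative under the convention $b_j\ge 0$. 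Your version buys a cleaner conceptual argument and quietly sidesteps an implicit point in the paper's computation (one needs $\sum_{j\le N} w_j b_j \neq 0$, or else the derived identity is itself contradictory and that configuration is vacuous); the paper's explicit identity, in exchange, is more quantitative and can be reused to pin down $\frac{q+q'}{p}$ numerically, as in Example~\ref{exam:5_2}. One small caveat applying to both proofs, yours included: the statement implicitly assumes the two slopes are distinct with $q,q'\neq 0$ (e.g.\ $q=q'<0$ trivially violates the conclusion), and your relabelling step $q<q'$ silently uses this; it is harmless but worth making explicit.
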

\begin{proof}
We deal with the case all the boundary slopes are non-negative. 
The case that all the boundary slopes are non-positive can be dealt with in the same way. 

Since $\Sigma_{K}(p/q)\cong \pm \Sigma_{K}(p/q') $, we have $||p/q||=||p/q'||$.

If $p/q,p/q'>0$, then $\frac{q+q'}{p}>0$.
By Theorem \ref{theorem:boundaryslopeI} (iii), $\frac{p}{q}, \frac{p}{q'}<0$ cannot happen, 
because in such a case, we have a contradiction: $0>\frac{q+q'}{p} = \frac{2||\mu||}{|| \lambda||}>0$. 
Thus in the following, we may assume that $p/q'<0<p/q$, in particular, $q'<0<q$.

Let $N$ be the integer that satisfies $\frac{b_N}{c_{N}} < \frac{p}{q} < \frac{b_{N+1}}{c_{N+1}}$. 
When $\frac{b_{m}}{c_m} <\frac{p}{q}$, we define $N=m$.
Then by Equation~\eqref{eqn:total-Culler-Shalen-norm}, 
\[
\left\|p/q \right\| = \sum_{j=1}^{m} w_{j}\Delta \left(p/q, b_j/c_j\right)= \sum_{j=1}^{N} w_{j}(b_{j}q-pc_{j}) + \sum_{j=N+1}^{m}w_{j}(-b_{j}q+pc_{j}).
\]
On the other hand, we have 
\[ \left\|p/q' \right\| =\sum_{j=1}^{m}w_{j}(-b_{j}q'+pc_{j}). \]
Thus we obtain 
\begin{align*}
 &0=\sum_{j=1}^{N} w_{j}(b_{j}q-pc_{j}) + \sum_{j=N+1}^{m}w_{j}(-b_{j}q+pc_{j})- \sum_{j=1}^{m}w_{j}(-b_{j}q'+pc_{j}).
 \end{align*}
It follows that 
\begin{align*}
0= \sum_{j=1}^{N}w_{j}b_{j}(q+q') -2 \sum_{j=1}^{N} w_{j}c_{j}p + \sum_{j=N+1}^{m}w_{j}b_{j}(q'-q) 
\end{align*}
and so, 
\begin{align*}
\frac{q+q'}{p} \sum_{j=1}^{N}w_{j}b_{j}= 2 \sum_{j=1}^{N}w_j c_j +\frac{q-q'}{p}\sum_{j=N+1}^{m} w_{j} b_{j} \;.
\end{align*}
Since we are assuming all the boundary slopes are non-negative, i.e., 
$c_i>0$ not all of $w_{i}$ are zero. Thus the right-hand side is always positive. This proves $\frac{q+q'}{p} > 0$.
\end{proof}

In general by calculating the total Culler-Shalen norm explicitly, we get more detailed constraint on $p,q,q'$ for the slopes of chirally cosmetic surgery $\Sigma^{3}_K(p/q) \cong -\Sigma^{3}_K(p/q')$. For later use, we illustrate one particular example.

\begin{example}[Constraint of chirally cosmetic surgery from $SL (2, \mathbb{C})$-Casson invariant for the $5_2$ knot]
\label{exam:5_2}
Let $K$ be the knot $5_2$ in $S^{3}$. 
Since no root of unity is a root of the Alexander polynomial $\Delta_K(t)=2t^{2}-3t+2$, all slopes except the boundary slopes $0,-4,-10$, are admissible. By \cite{BodenCurtis} the total Culler-Shalen seminorm $||p/q||$ of $K$ is given by the formula
\[ 
||p/q||=p+|p+4q|+|p+10q| = \begin{cases}
3p+14q & -\frac{p}{10}<q \\
p-6q    &  -\frac{p}{4} <q \leq -\frac{p}{10}\\
-p-14q & q\leq -\frac{p}{4}.
\end{cases} 
\]
Thus we have either
\begin{itemize}
\item $q>-\frac{p}{10}$, $-\frac{p}{4} <q' \leq -\frac{p}{10}$ and $p+7q=-3q'$, or,
\item $q>-\frac{p}{10}$, $q\leq -\frac{p}{4}$ and $7q+7q'=-2p$. 
\end{itemize}
\end{example}

\section{Other constraints for chirally cosmetic surgeries}
\label{section:others}
In this section, we review other known constraints for a knot to admit a chirally cosmetic surgeries.

\subsection{Degree two finite type invariant}

The Casson invariant, also known as the Casson-Walker invariant, is known to be regarded as the degree one part of the \textit{LMO invariant}. 
It is an invariant of rational homology 3-spheres which is universal among all the finite type invariants \cite{LeMurakamiOhtsuki}. 
In \cite{ItoTetsuya}, a constraint for purely and chirally cosmetic surgeries derived from higher degree parts of the LMO invariant was studied. 

Among them, the degree two part provides the following simple but useful obstruction for a knot to admit chirally cosmetic surgeries.
We remark that like the $SL(2,\mathbb{C})$ Casson invariant, the degree two part of the LMO invariant does not depend on orientations of 3-manifolds.
 
Let $v_{3}(K)$ be the primitive finite type invariant of degree three of a knot $K$ in $S^3$, normalized so that it takes the value 
$\frac{1}{4}$ on the right-handed trefoil. 
Using the derivatives of the Jones polynomial $V_{K}(t)$ of $K$, we see that $v_{3}(K)$ is written by
\[ 
v_{3}(K)= -\dfrac{1}{144}V'''_{K}(1)-\frac{1}{48}V''_{K}(1), 
\]
and $v_{3}(K)$ satisfies the following skein relation \cite{IchiharaWu}: 
\[ v_{3}(K_{+})-v_{3}(K_{-})=-\frac{a_{2}(K')+a_{2}(K'')}{4}+\frac{a_{2}(K_{+})+a_{2}(K_{-})+(lk(K,K'))^{2}}{8}.\]
Here, $(K_{+},K_{-},K_{0})$ denotes the usual skein triple, where we view $K_{0}$ as a two-component link $K' \cup K''$.

Let $a_{2}(K)$ and $a_{4}(K)$ be the coefficients of $z^{2}$ and $z^{4}$ in the Conway polynomial of $K$ respectively. 
From the degree two part of the LMO invariant, we have the following constraint on  a knot to admit chirally cosmetic surgeries.

\begin{theorem}\cite[Corollary 1.3]{ItoTetsuya}
\label{theorem:v3}
Assume that $p/q$- and $p/q'$-surgeries on a knot $K$ in $S^3$ give orientation-reversingly homeomorphic 3-manifolds.
\begin{enumerate}
\item[(i)] If $q=-q'$, then $v_{3}(K)=0$.
\item[(ii)] If $q \neq -q'$, then $v_{3}(K)\neq 0$ and 
\[ \frac{p}{q+q'}= \frac{7a_{2}(K)^{2}-a_{2}(K)-10a_{4}(K)}{8v_3(K)}.\]
\end{enumerate}
\end{theorem}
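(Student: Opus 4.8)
The plan is to reduce the statement to a computation-driven case analysis governed by the surgery formulae of Sections~\ref{sec:Casson}--\ref{section:others}, the decisive split being whether $q+q'=0$.

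First I would fix a genus-one Seifert surface and write its Seifert matrix as $V=\bigl(\begin{smallmatrix} a & e \\ e-1 & c\end{smallmatrix}\bigr)$ with $a,c,e\in\mathbb Z$, the skew condition $V-V^T=\bigl(\begin{smallmatrix}0&1\\-1&0\end{smallmatrix}\bigr)$ being forced by genus one. A direct computation gives $\Delta_K(t)=Dt^2-(2D-1)t+D$ with $D:=\det V$, hence $a_2(K)=D$, $a_4(K)=0$, $d(K)=2$. Since $\det(V+V^T)=4D-1$, the symmetrized form is definite exactly when $D\ge1$ and indefinite when $D\le0$; accordingly the Levine--Tristram function $\sigma_\omega(K)$ vanishes identically (so $\sigma(K,p)=0$ for every $p$) when $D\le0$, while for $D\ge1$ it equals $2\,\mathrm{sign}(a+c)$ on the arc $\theta_0<|\arg\omega|<\pi$ with $\cos\theta_0=\frac{2D-1}{2D}$ and is $0$ elsewhere. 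I would also record $v_3(K)$ as an explicit polynomial in $(a,c,e)$, computed from the skein relation of Section~\ref{section:others}; the features I need are that $v_3$ is odd under taking the mirror image and that, within this family, $v_3(K)=0$ holds precisely for the amphicheiral knots (which, having $\sigma=0$, all satisfy $D\le0$).

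Now suppose $\Sigma_K(p/q)\cong-\Sigma_K(p/q')$ with $\Sigma=S^3$. If $q+q'=0$ then $r=-r'$, and Theorem~\ref{theorem:v3}(i) gives $v_3(K)=0$; by the previous paragraph $K$ is amphicheiral, which is alternative~(i). If instead $q+q'\neq0$, Theorem~\ref{theorem:v3}(ii) gives $v_3(K)\neq0$ together with $\frac{p}{q+q'}=\frac{D(7D-1)}{8v_3(K)}$, while Theorem~\ref{theorem:cassonobst} (with $\lambda(\Sigma)=0$) gives $\sigma(K,p)=-4D(q+q')$. When $D\le0$ one has $\sigma(K,p)=0$, so $D(q+q')=0$; the case $D<0$ forces $q+q'=0$, a contradiction, and $D=0$ forces $p=0$ through the first identity, which is absurd. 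Thus $D\le0$ is impossible. For $D\ge1$ I combine the two relations into
\[ \sigma(K,p)=-\frac{32\,v_3(K)}{7D-1}\,p=:\kappa\,p,\qquad \kappa\in\mathbb Q^{\times}. \]
On the other hand $\sigma(K,p)=2\,\mathrm{sign}(a+c)\,N(p)$, where $N(p)$ counts the $p$-th roots of unity in the arc, and an elementary estimate gives $|N(p)-(1-\theta_0/\pi)p|\le1$, hence $|\sigma(K,p)-(\pm\bar\sigma)\,p|<2$ with $\bar\sigma:=2(1-\theta_0/\pi)$. For $D\ge2$ we have $\cos\theta_0=\frac{2D-1}{2D}\notin\{0,\pm\tfrac12,\pm1\}$, so Niven's theorem makes $\theta_0/\pi$, and therefore $\bar\sigma$, irrational; since $\kappa$ is rational, $|\kappa|\neq\bar\sigma$, and the bounded-fluctuation estimate then forces $p<2/\bigl|\,|\kappa|-\bar\sigma\,\bigr|$. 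Using the explicit value of $v_3(K)$ to make this threshold effective, the surviving slopes have small numerator and are eliminated by Corollary~\ref{cor:smallp} (supplemented, for the hyperbolic small members such as $5_2$, by the boundary-slope constraints of Section~\ref{sec:bdryslope}, cf.\ Example~\ref{exam:5_2}). Hence $D\ge2$ cannot occur.

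The only remaining case is $D=1$, for which $a_2=1$ characterizes the trefoil among genus-one alternating knots. Here $\theta_0=\pi/3$, $\bar\sigma=\tfrac43$, and $v_3=\pm\tfrac14$ yields $\kappa=\mp\tfrac43=\mp\bar\sigma$: this is exactly the resonant case in which $\sigma(K,p)=\kappa p$ holds along an arithmetic progression of $p$. The first identity becomes $p=3(q+q')$, and combining $p\equiv9\pmod{18}$ (the $p=9$ rows of Corollary~\ref{cor:smallp} give the $k=0$ instance) with the Dedekind-sum condition $s(q,p)+s(q',p)=\tfrac23$ pins the slopes to $\{(18k+9)/(3k+1),(18k+9)/(3k+2)\}$, the negative trefoil being obtained by mirroring; alternatively, since the trefoil is a torus knot with Seifert-fibered exterior, one invokes the known classification of its chirally cosmetic surgeries. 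This gives alternative~(ii). The main obstacle is the $D\ge2$ step: the rational--irrational resonance $|\kappa|=\bar\sigma$ cleanly singles out the trefoil, but turning this into a complete exclusion requires the explicit computation of $v_3(K)$ across the whole family (including the non-two-bridge, pretzel-type genus-one alternating knots, for which the $SL(2,\mathbb C)$ small-knot machinery may be unavailable) and the by-hand clearing of the residual finite list of small-numerator slopes; confirming that the $v_3=0$ locus coincides exactly with the amphicheiral knots is the other computational input that must be verified.
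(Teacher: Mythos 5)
Your proposal does not prove the statement it is attached to. Theorem~\ref{theorem:v3} is an assertion about an \emph{arbitrary} knot $K$ in $S^3$: any chirally cosmetic pair of surgeries forces either $v_3(K)=0$ (when $q=-q'$) or the identity $\frac{p}{q+q'}=\frac{7a_2(K)^2-a_2(K)-10a_4(K)}{8v_3(K)}$. You instead fix a genus-one Seifert matrix from the outset; for genus one the Conway polynomial is $1+\det(V)z^2$, so $a_4(K)=0$ identically in your family, and no computation inside that family can establish (or even detect) the general formula, in which the $a_4$-term is essential. What your sketch actually outlines is the classification Theorem~\ref{theorem:main-genus-one-alternating} for genus-one alternating knots --- and, fatally for the present purpose, both branches of your case split invoke Theorem~\ref{theorem:v3} itself (``Theorem~\ref{theorem:v3}(i) gives $v_3(K)=0$''; ``Theorem~\ref{theorem:v3}(ii) gives $\frac{p}{q+q'}=\frac{D(7D-1)}{8v_3(K)}$''). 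As a proof of Theorem~\ref{theorem:v3} the argument is therefore circular, independently of any other defects.

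For comparison: the paper supplies no proof of this theorem at all; it is quoted from \cite[Corollary 1.3]{ItoTetsuya}, where it is derived from the degree-two part of the LMO invariant. The relevant mechanism, recorded in Section~\ref{section:others}, is that this degree-two part --- unlike the Casson--Walker invariant exploited in Theorem~\ref{theorem:cassonobst} --- is insensitive to orientation, so $S^3_K(p/q)\cong -S^3_K(p/q')$ forces the two surgered manifolds to have equal degree-two invariants; feeding this into the surgery formula for that invariant, whose knot-dependent coefficients involve precisely $a_2(K)$, $a_4(K)$, $a_2(K)^2$ and $v_3(K)$, cancels the purely slope-dependent correction terms and leaves the displayed relation, from which (i) and (ii) follow. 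Nothing in that derivation requires genus one, positivity, Levine--Tristram signature jumps, or Niven's theorem. Even read charitably as an attempt on Theorem~\ref{theorem:main-genus-one-alternating}, your sketch concedes its hardest step: the exclusion of $D\ge 2$ rests on a threshold $p<2/\bigl|\,|\kappa|-\bar\sigma\,\bigr|$ in which $\kappa$ depends on the deferred, family-wide computation of $v_3(K)$, whereas the paper's route there is through Theorem~\ref{theorem:criterion}, Corollary~\ref{cor:positiveknot} and Proposition~\ref{proposition:computation}, supplemented by Example~\ref{exam:5_2} and Theorem~\ref{theorem:HFK}, not a rational--irrational resonance argument.
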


It is interesting to compare Theorem \ref{theorem:v3} (ii) with Theorem \ref{theorem:boundaryslopeI} (ii), (iii) and Theorem \ref{theorem:boundaryslopeII}: 
they provide unexpected relations among $v_{3}(K)$, the Culler-Shalen norm and boundary slopes for chirally cosmetic surgeries.

\subsection{Heegaard Floer homology}

Recent progress on Heegaard Floer homology theory provides the following strong constraint on  a knot to have a purely or chirally cosmetic surgeries. 

\begin{theorem}
\label{theorem:HFK}
{$ $}
\begin{enumerate}
\item[(i)] If $S^{3}_{K}(r) \cong S^{3}_{K}(r')$, then $r'=\pm r$ \cite{NiWu} .
\item[(ii)] If $S^{3}_{K}(r) \cong \pm S^{3}_{K}(r')$ for $r,r'$ with $ r r'>0$ (i.e. if $K$ admits chirally cosmetic surgeries along the slopes with the same sign), then $S^{3}_{K}(r)$ is an L-space  \cite[Theorem 1.6]{OzsvathSzabo11}, in particular, by \cite{Ni} $K$ is fibered. 
Moreover, if the genus of $K$ is one, then $K$ is the trefoil \cite{WangJiajun}. 
\end{enumerate}
\end{theorem}

Theorems \ref{theorem:HFK} (ii) and \ref{theorem:boundaryslopeI} (i) seem to suggest that chirally cosmetic surgeries with the same sign are quite limited.

\section{Chirally cosmetic surgeries on positive knots or alternating knots}
\label{section:application}

First of all, we combine Casson invariant constraint and degree two finite type invariant constraint to obtain following criterion for non-existence of chirally cosmetic surgery.

\begin{theorem}
\label{theorem:criterion}
Let $K$ be a knot and let $d(K)$ be the degree of the Alexander polynomial of $K$. If $S^{3}_{K}(p\slash q) \not\cong -S^{3}_{K}(p\slash q')$ for some $p/q,p/q'$ with $q+q'\neq 0$, then 
\[ 4|a_2(K)| \leq d(K) \left| \frac{7a_2^{2}(K)-a_2(K)-10a_4(K)}{8v_3(K)} \right|\]
\end{theorem}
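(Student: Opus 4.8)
The plan is to combine the Casson--Walker constraint (Theorem~\ref{theorem:cassonobst}) with the degree-two finite type constraint (Theorem~\ref{theorem:v3}) and then to control the resulting total signature $\sigma(K,p)$ by the degree of the Alexander polynomial. I read the hypothesis as the existence of a chirally cosmetic pair $S^{3}_{K}(p/q)\cong -S^{3}_{K}(p/q')$ with $q+q'\neq 0$ (this is the situation the non-existence Corollary is built on).

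First I would extract the two algebraic identities and merge them. Since $q+q'\neq 0$, Theorem~\ref{theorem:v3}(ii) gives $v_{3}(K)\neq 0$ together with
\[
\frac{p}{q+q'}=\frac{7a_{2}(K)^{2}-a_{2}(K)-10a_{4}(K)}{8v_{3}(K)}.
\]
On the other hand, taking $\Sigma=S^{3}$, so that $\lambda(\Sigma)=0$, in Theorem~\ref{theorem:cassonobst} yields $12(q+q')a_{2}(K)=-3\sigma(K,p)$, that is $\sigma(K,p)=-4(q+q')a_{2}(K)$. Writing $A=7a_{2}^{2}-a_{2}-10a_{4}$ and $B=8v_{3}$ and substituting $q+q'=pB/A$ from the first identity into the second, I obtain $|\sigma(K,p)|=4|a_{2}(K)|\,(|B|/|A|)\,p$. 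After dividing by $p>0$, the target inequality $4|a_{2}|\le d(K)\,|A/B|$ is seen to be \emph{exactly equivalent} to the estimate $|\sigma(K,p)|\le p\,d(K)$.

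Thus the whole theorem reduces to this single estimate, whose key input is the pointwise bound $|\sigma_{\omega}(K)|\le d(K)$ for every $\omega$ on the unit circle. Granting that, since $\sigma_{1}(K)=0$ (the matrix defining it is the zero matrix) and there are only $p-1$ remaining $p$-th roots of unity, I get $|\sigma(K,p)|=\bigl|\sum_{\omega^{p}=1}\sigma_{\omega}(K)\bigr|\le (p-1)d(K)\le p\,d(K)$, which completes the proof.

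I expect the pointwise bound $|\sigma_{\omega}(K)|\le d(K)$ to be the main obstacle. To prove it I would reuse the Hermitian matrix $M(\omega)=(1-\omega)S+(1-\overline{\omega})S^{T}$ from the proof of Corollary~\ref{cor:Casson2}, whose signature is $\sigma_{\omega}(K)$ and whose determinant vanishes precisely at the roots of $\Delta_{K}$ on the unit circle. Writing $\omega=e^{i\theta}$, the eigenvalues of $M(\omega)$ vary continuously, so $\sigma_{\omega}$ changes only when an eigenvalue crosses $0$, i.e.\ at a root of $\Delta_{K}$ on $S^{1}$; and near $\omega=1$ one computes $M(\omega)\approx i\theta(S^{T}-S)$, whose signature is $0$ since $S-S^{T}$ is a nonsingular skew form. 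Following $\sigma_{\omega}$ along the arc from $1$ to a given $\omega$, each crossing changes the signature by $\pm 2$ and the number of crossings at a root is at most its multiplicity; as the total multiplicity of roots of $\Delta_{K}$ on a semicircle is at most $d(K)/2$ (by the conjugation symmetry of $\Delta_{K}$), the accumulated change is at most $d(K)$. The delicate points are the behaviour at $\omega=\pm 1$ and the exact bookkeeping of root multiplicities against eigenvalue crossings; alternatively this pointwise signature bound is classical and may simply be cited.
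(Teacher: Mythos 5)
Your proposal is correct and follows essentially the same route as the paper: both combine the Casson--Walker identity $12(q+q')a_2(K)=-3\sigma(K,p)$ from Theorem~\ref{theorem:cassonobst} with the ratio formula of Theorem~\ref{theorem:v3}(ii), reducing everything to the bound $|\sigma(K,p)|\le p\,d(K)$, which follows from the pointwise estimate $|\sigma_{\omega}(K)|\le d(K)$. The only difference is that the paper simply asserts this pointwise signature bound as known, whereas you additionally sketch its standard proof via eigenvalue crossings of $(1-\omega)S+(1-\overline{\omega})S^{T}$ at roots of $\Delta_K$ on the unit circle.
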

\begin{proof}
By Theorem \ref{theorem:cassonobst} we have $12(q+q')a_2(K)=-3\sigma(K,p)$.
Since $|\sigma_{\omega}(K)| \leq d(K)$ for all $\omega \in \{ z \in \mathbb{C}\: | \: |z|=1\}$,
 $|12(q+q')a_2(K)| \leq 3p d(K)$. Therefore by Theorem \ref{theorem:v3}
\[ 4|a_2(K)| \leq d(K) \left| \frac{p}{q+q'}\right| = d(K) \left| \frac{7a_2^{2}(K)-a_2(K)-10a_4(K)}{8v_3(K)} \right| \]
\end{proof}

For positive knots we may make a criterion in a simpler (but a weaker) form.
\begin{corollary}
\label{cor:positiveknot}
Let $K$ be a non-trivial positive knot. If $\frac{16}{7} \geq g(K)\frac{a_2(K)}{v_3(K)}$  Then $K$ does not admit chirally cosmetic surgery where $g(K)$ denotes the genus of $K$.
\end{corollary}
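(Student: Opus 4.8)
The plan is to contrapose Theorem~\ref{theorem:criterion}: I will assume $K$ admits a chirally cosmetic surgery and derive the strict inequality $\tfrac{16}{7} < g(K)\,a_2(K)/v_3(K)$, which negates the hypothesis. The two facts that make this work are the general necessary condition of Theorem~\ref{theorem:criterion} together with the special arithmetic of the Conway coefficients of positive knots.

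First I would dispose of the degenerate slope case. Suppose $S^3_K(p/q)\cong -S^3_K(p/q')$. If $q+q'=0$, then Theorem~\ref{theorem:v3}(i) gives $v_3(K)=0$; but a non-trivial positive knot has $v_3(K)>0$, so this cannot happen (and $v_3(K)>0$ is also what allows the divisions below). Hence every chirally cosmetic pair on $K$ has $q+q'\neq 0$, so Theorem~\ref{theorem:criterion} applies.

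Next I would feed positivity into the criterion. For a non-trivial positive knot the Conway coefficients are non-negative, so $a_2(K)\ge 1$ and $a_4(K)\ge 0$; in particular $|a_2(K)|=a_2(K)$. Writing $a_2=a_2(K)$ and $a_4=a_4(K)$, and using $v_3(K)>0$ together with the universal bound $d(K)\le 2g(K)$, the chain $4a_2\le d(K)\tfrac{|\,\cdot\,|}{8v_3(K)}\le 2g(K)\tfrac{|\,\cdot\,|}{8v_3(K)}$ rearranges Theorem~\ref{theorem:criterion} into
\[ 16\,a_2\,v_3(K)\ \le\ g(K)\,\bigl|\,7a_2^{2}-a_2-10a_4\,\bigr|. \]
The decisive ingredient is a quadratic upper bound on $a_4$, namely $10\,a_4\le 7a_2^{2}-a_2$ (equivalently $a_4\le \tfrac{1}{2}a_2(a_2-1)$, which is sharp on connected sums of trefoils). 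Granting it, the expression inside the absolute value is non-negative, and since $a_2\ge 1$ and $a_4\ge 0$,
\[ \bigl|\,7a_2^{2}-a_2-10a_4\,\bigr| = 7a_2^{2}-a_2-10a_4\ \le\ 7a_2^{2}-a_2\ <\ 7a_2^{2}. \]
Combining the two displays gives $16\,a_2\,v_3(K) < 7\,g(K)\,a_2^{2}$, and dividing by $7\,a_2\,v_3(K)>0$ yields $\tfrac{16}{7} < g(K)\,a_2(K)/v_3(K)$, contradicting the hypothesis. Therefore $K$ admits no chirally cosmetic surgery.

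The main obstacle is exactly this positive-knot input, above all the sharp estimate $10a_4\le 7a_2^{2}-a_2$ together with $a_4\ge 0$ and $v_3(K)>0$; these are the points where results specific to positive knots (non-negativity of the Conway polynomial and the quadratic control of $a_4$ by $a_2$) are essential. Once they are in hand, the remainder is routine manipulation of Theorems~\ref{theorem:criterion} and \ref{theorem:v3}.
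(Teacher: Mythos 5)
Your overall skeleton (dispose of $q+q'=0$ via Theorem~\ref{theorem:v3}(i) and $v_3(K)>0$, then feed $d(K)=2g(K)$, $a_4(K)\geq 0$, $v_3(K)>0$ into Theorem~\ref{theorem:criterion}) matches the paper's, but the step you yourself flag as decisive is a genuine gap: the inequality $10a_4(K)\leq 7a_2^{2}(K)-a_2(K)$ for positive knots is neither proved nor cited, and it is not among the facts the paper invokes (Cromwell gives only non-negativity of the Conway coefficients). Your parenthetical ``equivalently $a_4\leq\tfrac{1}{2}a_2(a_2-1)$'' is also not an equivalence: $a_4\leq\tfrac{1}{2}a_2(a_2-1)$ is strictly stronger (it gives $10a_4\leq 5a_2^{2}-5a_2\leq 7a_2^{2}-a_2$), and it is that stronger form, not the displayed one, that is sharp on connected sums of trefoils; in any case neither form is justified by anything you say. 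A smaller issue of the same kind: non-negativity of the Conway coefficients alone gives $a_2\geq 0$, not $a_2\geq 1$, so you need an additional input to know $a_2\neq 0$.

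The paper closes both holes with one ingredient you never use: for a positive knot the total $p$-signature is strictly negative, $\sigma(K,p)<0$. Combined with Theorem~\ref{theorem:cassonobst}, the assumed surgery gives $12(q+q')a_2(K)=-3\sigma(K,p)>0$, whence $a_2(K)>0$ and $q+q'>0$ (this also rules out $q+q'=0$ without appealing to Theorem~\ref{theorem:v3}(i)). Then Theorem~\ref{theorem:v3}(ii) yields $\frac{7a_2^{2}-a_2-10a_4}{8v_3}=\frac{p}{q+q'}>0$, so under the surgery assumption the quantity inside your absolute value is automatically positive and the absolute value drops for free; the chain $2a_2\leq g(K)\frac{7a_2^{2}-a_2-10a_4}{8v_3}<\frac{7a_2^{2}}{8v_3}g(K)$ then finishes exactly as in your last display. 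In short, the sign of $7a_2^{2}-a_2-10a_4$ must be extracted from the assumed cosmetic surgery via the Casson/signature constraint, not asserted as an unconditional inequality about positive knots; unless you can actually prove $10a_4\leq 7a_2^{2}-a_2$ (or the stronger $a_4\leq\tfrac{1}{2}a_2(a_2-1)$) for all positive knots, your argument does not go through.
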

\begin{proof}
We use the following properties of a positive knot $K$.
\begin{enumerate}
\item[(a)] The total $p$-signature of $K$ is always negative: $\sigma(K,p)<0$. 
\item[(b)] All the coefficients of the Conway polynomial of $K$ are non-negative. In particular, $a_{2}(K),a_4(K) \geq 0$ \cite[Corollaries 2.1 and 2.2]{Cromwell}.
\item[(c)] The degree of the Alexander polynomial is twice of the genus of $K$; $d(K)=2g(K)$.
\item[(d)] $v_3(K)>0$ \cite{Stoimenow-positive}.
\end{enumerate}
Assume $S^{3}_{K}(p\slash q) \cong -S^{3}_{K}(p\slash q')$.
By (a),(b) and Theorem \ref{theorem:cassonobst} we have $12(q+q')a_2(K) =-3 \sigma(K,p)>0$ so $q+q'>0$.
Then by Theorem \ref{theorem:v3} and (c),(d) we get 
\[ 2a_2(K) \leq g(K) \frac{7a_2^{2}(K)-a_2(K)-10a_4(K)}{8v_3(K)} < \frac{7a_2^{2}(K)}{8v_3(K)}g(K).\]
\end{proof}

As a first application of our criterion, we show that under a mild assumption, knots obtained by adding sufficiently many twists never admit a chirally cosmetic surgery. 

\begin{theorem}
\label{theorem:larget_N}
Let $K$ be a knot represented by a diagram $D$.
At a positive crossing $c$ of $D$ let $L=K' \cup K''$ be the 2-component link obtained by resolving the crossing $c$, and for $N>0$, let $K_{N}$ be the knot obtained from $K$ by applying $\overline{t}_{N+1}$-move at $c$, which replaces the positive crossing $c$ with consecutive $(2N+1)$ positive crossings (see Figure \ref{fig2}). 
If $lk(K',K'')\neq 0$, then for a sufficiently large $N$, $K_N$ does not admit chirally cosmetic surgery.
\end{theorem}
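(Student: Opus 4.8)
The plan is to suppose that $K_N$ admits a chirally cosmetic surgery and to derive a contradiction for all large $N$, by pitting the \emph{linear} growth of $a_2(K_N)$ and the boundedness of $d(K_N)$ against the \emph{quadratic} growth of $v_3(K_N)$. First I would reduce the problem to two numerical conditions. Suppose $S^3_{K_N}(p/q)\cong -S^3_{K_N}(p/q')$ for distinct slopes, so that, comparing first homology, the numerators agree and $q\neq q'$. If $q=-q'$, then Theorem~\ref{theorem:v3}(i) forces $v_3(K_N)=0$; if $q+q'\neq 0$, then the argument proving Theorem~\ref{theorem:criterion} forces
\[ 4|a_2(K_N)|\;\leq\; d(K_N)\left|\frac{7a_2(K_N)^2-a_2(K_N)-10a_4(K_N)}{8v_3(K_N)}\right|. \]
Hence it suffices to show that, for all sufficiently large $N$, both $v_3(K_N)\neq 0$ and the reverse strict inequality hold.

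The crucial geometric input is that the $\overline{t}$-move twists two \emph{anti-parallel} strands (see Figure~\ref{fig2}): after the oriented resolution of any single crossing of the twist region the remaining $2N$ crossings become curls removable by Reidemeister~I moves, so that each such resolution returns the \emph{same} link $L=K'\cup K''$. Applying the Conway skein relation at one crossing of the twist region therefore gives $\nabla_{K_N}(z)-\nabla_{K_{N-1}}(z)=z\,\nabla_L(z)$, whence
\[ \nabla_{K_N}(z)=\nabla_K(z)+N\,z\,\nabla_L(z). \]
Reading off the coefficients of $z^2$ and $z^4$ yields $a_2(K_N)=a_2(K)+N\ell$ and $a_4(K_N)=a_4(K)+N\,c$, where $\ell=lk(K',K'')$ and $c$ is the coefficient of $z^3$ in $\nabla_L$; both are linear in $N$, while $\deg_z\nabla_{K_N}$, and hence $d(K_N)$, stabilizes to a constant. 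Here the hypothesis $\ell\neq 0$ is exactly what makes $|a_2(K_N)|\to\infty$.

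Next I would compute $v_3(K_N)$ from the skein relation for $v_3$ recalled before Theorem~\ref{theorem:v3}, applied to the triple $(K_N,K_{N-1},L)$ whose smoothing $L$ has fixed factors $K',K''$ with $lk(K',K'')=\ell$:
\[ v_3(K_N)-v_3(K_{N-1})=-\frac{a_2(K')+a_2(K'')}{4}+\frac{a_2(K_N)+a_2(K_{N-1})+\ell^2}{8}. \]
Since $a_2(K_N)+a_2(K_{N-1})$ is linear in $N$ with leading term $2\ell N$, the right-hand side is linear in $N$ with leading coefficient $\ell/4$; telescoping from $1$ to $N$ then gives $v_3(K_N)=\frac{\ell}{8}N^2+O(N)$. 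In particular $v_3(K_N)\neq 0$ for large $N$, which settles the case $q=-q'$.

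Finally I would compare the two sides of the displayed inequality. The left-hand side $4|a_2(K_N)|\sim 4|\ell|\,N$ grows linearly, whereas on the right the quotient converges,
\[ \frac{7a_2(K_N)^2-a_2(K_N)-10a_4(K_N)}{8v_3(K_N)}=\frac{7\ell^2N^2+O(N)}{\ell N^2+O(N)}\longrightarrow 7\ell, \]
and $d(K_N)$ is bounded, so the entire right-hand side stays bounded as $N\to\infty$. Thus for large $N$ the left-hand side strictly exceeds the right-hand side, contradicting Theorem~\ref{theorem:criterion}; together with $v_3(K_N)\neq 0$ this rules out both cases. I expect the main obstacle to be the asymptotic bookkeeping that makes these orders of growth rigorous — above all, verifying that the anti-parallel $\overline{t}$-move keeps $d(K_N)$ bounded and that the leading coefficient of $v_3(K_N)$ is the genuinely nonzero number $\ell/8$. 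Both facts rest on the reduction that the extra twist crossings are nugatory after resolution, together with $\ell\neq 0$, so the heart of the proof is to make the stable identity $\nabla_{K_N}=\nabla_K+N z\nabla_L$ and its $v_3$-analogue completely precise.
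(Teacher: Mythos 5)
Your proposal matches the paper's own proof essentially step for step: the same skein-theoretic recursions giving $a_2(K_N)=a_2(K)+lk(K',K'')N$, $a_4(K_N)=a_4(K)+a_3(L)N$ and a quadratically growing $v_3(K_N)$, the same use of Theorem~\ref{theorem:v3}(i) to dispose of the $q+q'=0$ case, and the same asymptotic comparison (linear left-hand side versus bounded right-hand side) against the inequality of Theorem~\ref{theorem:criterion}. The only divergence is minor and harmless: you bound $d(K_N)$ directly from the stable identity $\nabla_{K_N}=\nabla_K+Nz\nabla_L$, whereas the paper bounds it by twice the Seifert-algorithm genus of the diagram, which the $\overline{t}_N$-move preserves; incidentally, your leading term $\frac{\ell}{8}N^2$ for $v_3(K_N)$ (with $\ell=lk(K',K'')$) is the correct one --- the paper's displayed formula has a typo reading $\frac{1}{8}lk(K',K'')^2N^2$, whence its $d_3=4\,lk(K',K'')^3$ rather than $4\,lk(K',K'')^2$ --- and this does not affect either argument since $d_3\neq 0$ in both cases exactly when $lk(K',K'')\neq 0$.
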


\begin{figure}[htb]
 \begin{center}
  {\unitlength=1cm
  \begin{picture}(8.89,1.66)
   \put(0,0){\includegraphics{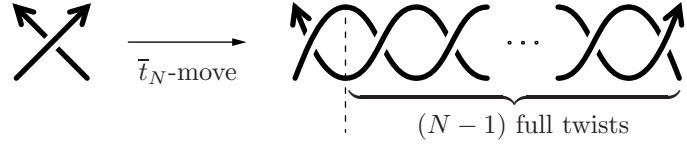}}
  \put(4.5,.6){$\underbrace{\hspace{4.4cm}}$}
  \put(5.4,0){$(N-1)$ full twists}
  \put(1.7,0.7){$\overline{t}_N$-move}
  \end{picture}}
  \caption{The $\overline{t}_N$-move: It replaces a positive crossing $c$ with consecutive $2N-1$ positive crossings.} 
  \label{fig2}
 \end{center}
\end{figure}

\begin{proof}
Let $\nabla_{L}(z)=a_1(L)z +a_3(L)z^{3}+\cdots = lk(K',K'')z +a_3(L)z^{3}+\cdots$.
By the skein formula we have  
\begin{align*}
a_2(K_N) & =a_2(K)+ lk(K',K'')N\\
v_3(K_N) &= v_3(K) - \frac{1}{4}(a_2(K')+a_2(K''))N + \frac{1}{8}\left(2 a_2(K) +lk(K',K'')^{2} \right)N \\
& \quad+ \frac{1}{8}lk(K',K'')^{2}N^{2}\\
a_4(K_N) &=a_4(K) + a_3(L) N 
\end{align*}
Since we have assumed $lk(K',K'')\neq 0$, $a_2(K_N) \neq 0$ and $v_3(K_N)\neq 0$ for sufficiently large $N$. In particular, by Theorem \ref{theorem:v3} $K_N$ does not admit a chirally cosmetic surgery $S^{3}_{K_N}(p/q)\cong -S^{3}_{K_N}(p/q')$ with $q+q'=0$.

Moreover, by these formula we have 
\[ \left| \frac{7a_2^{2}(K_N)-a_2(K_N)-10a_4(K_N)}{32 a_{2}(K_N)v_3(K_N)} \right| = 
 \left| \frac{c_0+c_1 N+ c_2N^{2}}{d_0+d_1N + d_2N + d_3N^{3}} \right|\]
where $c_0,c_1,c_2,d_0,d_1,d_2,d_3$ are constant that do not depend on $N$ and $d_3=4 lk(K',K'')^{3} \neq 0$. 
Let $g(D)$ be the genus of the diagram, the genus of the Seifert surface of $K$ obtained by Seifert's algorithm. Since $\overline{t}_{N}$-move preserves the diagram genus $d(K_N) \leq 2g(K_N) \leq 2g(D_N) = 2g(D)$, where $g(D_N)$ denotes the diagram genus of the diagram of $K_N$. Hence
\begin{align*}
\lim_{N\to \infty} d(K_N) \left| \frac{7a_2^{2}(K_N)-a_2(K_N)-10a_4(K_N)}{32 a_{2}(K_N)v_3(K_N)} \right| = 0
\end{align*}
Therefore by Theorem \ref{theorem:criterion} if $N$ is sufficiently large, it does not admit chirally cosmetic surgery.
\end{proof}

Finally we give a complete classification of chirally cosmetic surgeries of alternating knot of genus one.

\begin{theorem}
\label{theorem:main-genus-one-alternating}
Let $K$ be an alternating knot of genus one. 
For distinct slopes $r$ and $r'$, if the $r$- and $r'$-surgeries on $K$ are chirally cosmetic, 
then either
\begin{enumerate}
\item[(i)] $K$ is amphicheiral and $r=-r'$, or,
\item[(ii)] $K$ is the positive or the negative trefoil, and 
\[ \{r,r'\} = 
\left\{\frac{18k+9}{3k+1}, \frac{18k+9}{3k+2} \right\},\ \mathit{or}\ 
\left\{- \frac{18k+9}{3k+1}, - \frac{18k+9}{3k+2} \right\} \quad (k \in \mathbb{Z}).\]
\end{enumerate}
\end{theorem}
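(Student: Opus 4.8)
The plan is to reduce the whole problem to two explicitly computable invariants and then to split into two cases according to the sign of $a_2(K)$. First I would use the standard description of a genus-one alternating knot as a double twist knot, so that $K$ has a Seifert matrix $\left(\begin{smallmatrix} a & 1 \\ 0 & b \end{smallmatrix}\right)$ for integers $a,b$ with $ab\neq 0$. From this I would record $a_2(K)=ab$, the fact that $a_4(K)=0$ and $d(K)=2g(K)=2$ (genus one), and, crucially, the formula $v_3(K)=-\tfrac18\,ab(a+b)$ (obtained from the skein relation for $v_3$ in Section~\ref{section:others}, or by evaluating the Jones polynomial along the family). Two consequences are immediate and drive everything: $K$ is a trefoil exactly when $(a,b)=\pm(1,1)$; and $v_3(K)=0$ if and only if $a+b=0$, which is exactly the condition for $K$ to be an (amphicheiral) two-bridge knot such as the figure-eight.

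The key dichotomy is the sign of $a_2=ab$. Since $\Delta_K(t)=ab(t+t^{-1})+(1-2ab)$, its roots lie on the unit circle precisely when $ab>0$; hence when $ab<0$ the Levine--Tristram signature $\sigma_\omega(K)$ is identically $0$ and the total signature $\sigma(K,p)$ vanishes for every $p$. So suppose $ab<0$ and that $\Sigma_K(p/q)\cong-\Sigma_K(p/q')$. Theorem~\ref{theorem:cassonobst} gives $12(q+q')a_2(K)=-3\sigma(K,p)=0$, and as $a_2=ab\neq 0$ this forces $q+q'=0$, i.e.\ $r'=-r$. Then the $q=-q'$ case of Theorem~\ref{theorem:v3} yields $v_3(K)=0$, hence $a+b=0$, so $K$ is amphicheiral: this is exactly conclusion (i). (In particular non-amphicheiral knots with $ab<0$, such as $6_1$, admit no chirally cosmetic surgery at all.)

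Now suppose $ab>0$. Then $a+b\neq 0$, so $v_3(K)\neq 0$ and the $q=-q'$ alternative of Theorem~\ref{theorem:v3} is excluded; thus any chirally cosmetic pair has $q+q'\neq 0$ and Theorem~\ref{theorem:criterion} applies. Substituting $a_2=ab$, $a_4=0$, $d(K)=2$ and $v_3=-\tfrac18 ab(a+b)$ into the inequality of Theorem~\ref{theorem:criterion} and dividing by $ab>0$ reduces it to
\[ 2|a+b|\ \le\ 7-\tfrac{1}{ab}\ <\ 7, \]
so $|a+b|\le 3$. Since $a$ and $b$ have the same sign this leaves only $(a,b)=\pm(1,1)$ (a trefoil) and $\{|a|,|b|\}=\{1,2\}$ (the knot $5_2$ and its mirror).

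It remains to eliminate $5_2$ and to finish the trefoil case, and I expect the elimination of $5_2$ to be the main obstacle, since $5_2$ actually satisfies the crude inequality of Theorem~\ref{theorem:criterion}. For $5_2$ one has $a_2=2$, $v_3=\pm\tfrac34$, so Theorem~\ref{theorem:v3}(ii) forces $p/(q+q')=\pm\tfrac{13}{3}$; in particular $13\mid p$ and $\sigma(K,p)=-4(q+q')a_2=\pm\tfrac{24}{13}p$. But the Levine--Tristram signature of $5_2$ equals $\pm2$ only on the sub-arc of the unit circle bounded by the two roots $\cos\theta=\tfrac34$ of $\Delta$, whose angular fraction is strictly less than $12/13$; a direct count of $p$-th roots of unity in this arc then shows $|\sigma(K,p)|<\tfrac{24}{13}p$, contradicting Theorem~\ref{theorem:cassonobst} (the explicit $SL(2,\mathbb C)$-Casson constraint of Example~\ref{exam:5_2} gives a corroborating incompatibility on the admissible ranges of $(p,q,q')$). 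Hence $5_2$ admits no chirally cosmetic surgery, so $K$ must be the trefoil, whose chirally cosmetic surgeries are exactly the Mathieu family by the classification for torus knots; this yields conclusion (ii). The delicate points of the argument are thus the $v_3$ computation, the exact arc-count of the signature that kills $5_2$, and the initial structural input that the Seifert-matrix normalization genuinely exhausts all genus-one alternating knots.
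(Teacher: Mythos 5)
There is a genuine gap, and it is exactly at the point you flagged as ``the initial structural input'': it is \emph{not} true that every genus-one alternating knot is a double twist knot with Seifert matrix $\left(\begin{smallmatrix} a & 1 \\ 0 & b \end{smallmatrix}\right)$. By Stoimenow's classification (the one the paper invokes), a genus-one alternating knot is either a double twist knot $J(\ell,m)$ with $\ell,m$ even \emph{or} a $3$-pretzel knot $P(p,q,r)$ with $p,q,r$ odd of the same sign, and the latter family is not reducible to your normal form. Concretely, $P(3,3,3)$ has symmetrized Seifert form $3\left(\begin{smallmatrix} 2 & 1 \\ 1 & 2\end{smallmatrix}\right)$, an imprimitive form of discriminant $-27$, which is not integrally equivalent to any $\left(\begin{smallmatrix} 2a & 1 \\ 1 & 2b \end{smallmatrix}\right)$; and in any case $v_3$ is not determined by the Seifert matrix, so the formula $v_3=-\tfrac18 ab(a+b)$ cannot be extracted from Seifert data even when the Alexander data happen to match. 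Indeed Proposition~\ref{proposition:computation} gives $a_2(P(3,3,3))=7$ and $v_3(P(3,3,3))=\tfrac92$, which is incompatible with $-ab(a+b)/8$ for any integer factorization $ab=7$. Your case analysis thus silently omits an infinite family of knots covered by the theorem. The paper handles this family separately (its Case 3): for $p,q,r>0$ odd, $P(p,q,r)$ is a positive knot with $a_2/v_3<16/(p+q+r+1)$ by Proposition~\ref{proposition:computation}, so Corollary~\ref{cor:positiveknot} excludes chirally cosmetic surgeries unless $p+q+r<7$, leaving only the trefoil ($p+q+r=3$) and $p+q+r=5$, the latter being the mirror of $J(2,4)=5_2$, which is then disposed of by the $5_2$ analysis.

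On the part of the argument you did carry out: within the double twist family your route is essentially the paper's (its Cases 1 and 2), namely Theorem~\ref{theorem:cassonobst} plus the vanishing of $\sigma(K,p)$ when $a_2<0$ to force $q+q'=0$ and hence amphicheirality via $v_3$, and Theorem~\ref{theorem:criterion} with $d(K)=2$, $a_4=0$ to reduce the same-sign case to the trefoil and $5_2$; your inequality $2|a+b|\le 7-\tfrac{1}{ab}$ is the correct specialization. Your elimination of $5_2$, however, genuinely differs from the paper's: the paper combines the explicit Culler--Shalen norm of Example~\ref{exam:5_2} (which, with $p/(q+q')=-13/3$, forces $q'=2q$) with the Heegaard Floer fiberedness obstruction of Theorem~\ref{theorem:HFK}(ii), whereas you use only the Casson--Gordon constraint: since $13\mid p$ and $\sigma(5_2,p)$ would need magnitude $\tfrac{24}{13}p$ while $\sigma_\omega(5_2)$ is nonzero only on the arc cut out by the roots $\cos\theta_0=\tfrac34$ of $\Delta$, of angular fraction $1-\theta_0/\pi\approx 0.77<\tfrac{12}{13}$, a root-of-unity count gives $|\sigma(5_2,p)|\le 2(p(1-\theta_0/\pi)+1)<\tfrac{24}{13}p$ for all $p\ge 13$. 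That alternative does appear to close, and is arguably more elementary than the paper's, provided you make the endpoint bookkeeping in the count precise; but it does not repair the missing pretzel case, which remains the substantive defect of the proposal.
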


The classification of cosmetic surgeries on the trefoil knot seems to be known before; See Corollary \ref{corollary:torus-knot} in Appendix, where we give a complete list of cosmetic surgeries on torus knots, based on the result in \cite{Rong}. 
Also, the classification of chirally cosmetic surgeries on amphicheiral knots follows from Theorem \ref{theorem:HFK} (i); 
If $K$ is amphicheiral, $S^{3}_K(r)\cong -S^{3}_K(-r)$ for all slope $r$. Therefore, when $K$ is amphicheiral and $r,r'$ are distinct slopes such that $S^{3}_{K}(r)\cong -S^{3}_{K}(r')$, then $S^{3}_{K}(r)\cong -S^{3}_{K}(r')\cong S^3_K(-r')$. By Theorem \ref{theorem:HFK} (i), this implies $r= \pm (-r')$ so $r=-r'$. 

The main content of Theorem \ref{theorem:main-genus-one-alternating} is to show the non-existence of chirally cosmetic surgeries on other alternating knots of genus one, which will be achieved in the sequel. 

It is known \cite{Stoimenow} that alternating knot of genus one is either
\begin{itemize}
\item The double twist knot $J(\ell,m)$ with even $\ell, m$ (see Figure \ref{fig1}), or,
\item The 3-pretzel knot $P(p,q,r)$ with odd $p,q,r$ having the same sign. 
\end{itemize}

We remark that the set of alternating knots of genus one includes the set of positive knots of genus one, which are the 3-pretzel knot $P(p,q,r)$ with odd $p,q,r>0$ \cite{Stoimenow}.

\begin{figure}[htb]
 \begin{center}
  {\unitlength=1cm
  \begin{picture}(12,7.6)
   \put(0,0){\includegraphics{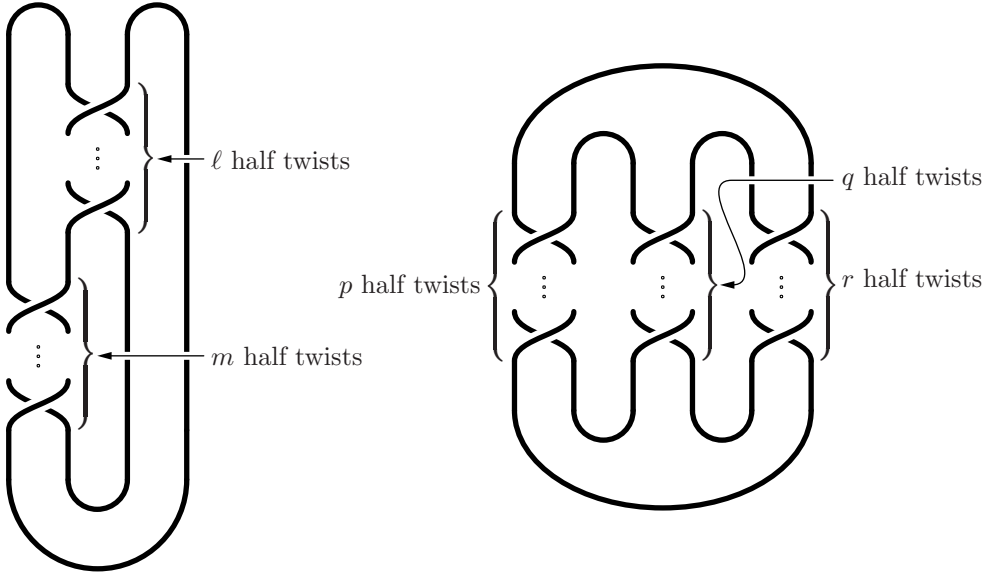}}
  \put(1.8,4.5){\rotatebox{90}{$\underbrace{\hspace{2cm}}$}}
  \put(2.8,5.35){$\ell$ half twists}
  \put(1,1.9){\rotatebox{90}{$\underbrace{\hspace{2cm}}$}}
  \put(2.8,2.75){$m$ half twists}
  \put(6.45,4.8){\rotatebox{-90}{$\underbrace{\hspace{2cm}}$}}
  \put(4.5,3.7){$p$ half twists}
  \put(9.3,2.81){\rotatebox{90}{$\underbrace{\hspace{2cm}}$}}
  \put(11.18,5.15){$q$ half twists}
  \put(10.87,2.81){\rotatebox{90}{$\underbrace{\hspace{2cm}}$}}
  \put(11.18,3.78){$r$ half twists}
  \end{picture}}
  \caption{The left figure shows the double twist knot $J(\ell,m)$. The knot $J(\ell,-m)$ is obtained by 
twisting the left-sided two strands ``negatively''. The right figure shows the 3-pretzel knot $P(p,q,r)$.} 
  \label{fig1}
 \end{center}
\end{figure}

First of all, we compute the Conway polynomial and the invariant $v_{3}$.
Although these formulae seem to be appeared in several places (in \cite{IchiharaWu}, for example), here we give a direct computation for readers' convenience.

\begin{proposition}
\label{proposition:computation}
Let $K$ be a genus one alternating knot.
\begin{enumerate}
\item If $K$ is the double twist know $J(\ell,m)$ with even $\ell,m$,
\[ \nabla_{K}(z)= 1+ \frac{\ell m}{64}z^{2}, \quad v_{3}(K)= -\frac{m \ell}{64}(\ell+m). \]
\item If $K$ is the 3-pretzel knot $P(p,q,r)$ with positive odd $p,q,r,$ then 
\begin{align*} \nabla_{P(p,q,r)}(z)& = 1+ \frac{1}{4}\left( pq+qr+rp+1 \right)z^{2},\\ v_{3}(P(p,q,r))&= \frac{1}{64}\left( (p+q+r+1)(pq+qr+rp+1)+(p-1)(q-1)(r-1) \right)
\end{align*}
\end{enumerate}
\end{proposition}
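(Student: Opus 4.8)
The proof of Proposition~\ref{proposition:computation} is a direct computation using the skein relations for the Conway polynomial and for $v_3$, so the plan is to reduce both knot families to standard links whose invariants are known, and then build up the general formulae by induction on the number of twists.

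For the double twist knot $J(\ell,m)$, the plan is to compute $\nabla_K(z)$ via the Conway skein relation $\nabla_{L_+}(z)-\nabla_{L_-}(z) = z\,\nabla_{L_0}(z)$. First I would fix one of the two twist regions and resolve a crossing there; resolving a crossing in the $\ell$-twist region relates $J(\ell,m)$ to $J(\ell-2,m)$ and to a two-component link, and iterating gives a recursion in $\ell$ (and symmetrically in $m$). Since $J(0,m)$ and $J(\ell,0)$ are unknots, the base cases are trivial, and one obtains $\nabla_{J(\ell,m)}(z)=1+\tfrac{\ell m}{64}z^2$ once the normalisation of the half-twist count is matched to the figure. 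From this, $a_2 = \ell m/64$ and $a_4=0$. For $v_3$, I would apply the skein relation for $v_3$ stated in Section~\ref{section:others}, namely
\[
v_3(K_+)-v_3(K_-) = -\frac{a_2(K')+a_2(K'')}{4} + \frac{a_2(K_+)+a_2(K_-)+(lk(K',K''))^2}{8},
\]
again resolving in one twist region. The two-component link $K'\cup K''$ obtained by smoothing is an unknot or a simple twist link whose $a_2$ and linking number are easy to read off, so the recursion in $\ell$ (with $m$ fixed) becomes an explicit arithmetic progression whose solution, together with the symmetry $J(\ell,m)\leftrightarrow J(m,\ell)$, yields $v_3 = -\tfrac{\ell m}{64}(\ell+m)$.

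For the pretzel knot $P(p,q,r)$ the same strategy applies, but the bookkeeping is heavier because there are three twist regions. The plan is to resolve a crossing in the $p$-region: $P(p,q,r)_+ = P(p,q,r)$, $P(p,q,r)_- = P(p-2,q,r)$, and the oriented resolution $K_0$ is a two-component link whose components are unknots linked through the $q$- and $r$-twist regions, so $lk(K',K'')$ and $a_2(K_0)$ are computable directly. Iterating down to a base pretzel (e.g. $P(1,q,r)$, which is a torus knot or twist knot) gives recursions in $p$; symmetrising over the three slots and solving the resulting linear recursions produces the stated quadratic expression $\nabla_{P(p,q,r)}(z)=1+\tfrac14(pq+qr+rp+1)z^2$ and the cubic-type formula for $v_3$. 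I would present the Conway computation first, extract $a_2$ and $a_4$, and then feed these into the $v_3$ skein relation.

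The main obstacle I expect is \emph{sign and normalisation bookkeeping} rather than any conceptual difficulty: one must track the orientations induced on the two smoothed components (to get the correct sign of $lk(K',K'')$ and of the $a_2$ terms), verify that the oriented resolution at a given crossing is the one that actually decreases the twist count, and match the ``half twist'' convention of Figures~\ref{fig1} to the $z^2$-coefficient (this is the source of the $1/64$ versus $1/4$ discrepancy between the two families, coming from the even-versus-odd twist parametrisation). Care is also needed at the base cases to ensure the knots are genuinely unknotted and that the linking numbers of the intermediate links are correctly signed. Once these conventions are pinned down, both formulae follow by straightforward induction, and I would verify the final answers against a small case (such as $P(1,1,1)$ being the trefoil, where $a_2=1$ and $v_3=\tfrac14$) as a consistency check.
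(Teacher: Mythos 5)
Your plan coincides with the paper's proof in all essentials: the paper also computes $\nabla$ by the Conway skein relation applied in a twist region, with the resolved link being the torus link $T(2,-m)$ for $J(\ell,m)$ and $T(2,q+r)$ for $P(p,q,r)$, reduces to the base cases $J(0,m)$ (unknot) and $P(1,1,1)$ (trefoil) using the cyclic symmetry of the pretzel slots, and then feeds the resulting $a_2$'s and linking numbers into the $v_3$ skein relation of Section~\ref{section:others}, solving the ensuing arithmetic-progression recursions exactly as you describe.

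There is, however, one genuine flaw in your write-up: you assert that the recursion yields $\nabla_{J(\ell,m)}(z)=1+\frac{\ell m}{64}z^{2}$ ``once the normalisation of the half-twist count is matched,'' and you attribute the $1/64$ versus $1/4$ discrepancy between the two families to the even-versus-odd parametrisation. Carried out correctly, your own recursion gives $\frac{\ell m}{4}$, not $\frac{\ell m}{64}$: each of the $\ell/2$ resolutions in the $\ell$-twist region contributes $z\cdot\nabla_{T(2,\pm m)}(z)=\pm\frac{m}{2}z^{2}$, so $\nabla_{J(\ell,m)}(z)=1+\frac{\ell m}{4}z^{2}$, which is exactly what the paper's proof derives; the $\frac{\ell m}{64}$ in the proposition statement is a typo. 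No half-twist convention can absorb the factor, since Figure~\ref{fig1} fixes $\ell,m$ as half-twist counts, $a_2$ of a knot is an integer, and $J(2,2)$ is a trefoil with $a_2=1$ (whereas $\frac{\ell m}{64}$ would give $\frac{1}{16}$); moreover the later applications, e.g.\ the ratio $\frac{a_2}{v_3}=\frac{16}{\ell+m}$ used in Case~1 of the proof of the classification theorem, require $a_2=\frac{\ell m}{4}$. Note also that if you carried $a_2=\frac{\ell m}{64}$ into the $v_3$ recursion, the stated (and correct) value $v_3(J(\ell,m))=-\frac{\ell m}{64}(\ell+m)$ would come out wrong, since the increments $\frac{1}{8}\bigl(a_2(J(\ell-2,m))+a_2(J(\ell,m))+\frac{m^2}{4}\bigr)$ depend on $a_2$. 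So: keep your strategy, drop the attempted reconciliation of the $1/64$, and add $J(2,2)$ alongside $P(1,1,1)$ to your consistency checks --- that single check would have exposed the issue.
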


\begin{proof}
By the skein relation of the Conway polynomial, we have
\[
 \nabla_{J(\ell-2, m)}(z) -\nabla_{J(\ell,m)}(z) = z \nabla_{T(2, - m)}(z), 
\]
where $T(2,m)$ denotes the $(2,m)$-torus link. 
Here we apply the skein relation at a crossing in the vertical $\ell$ half twists in Figure \ref{fig1}.
Since $\nabla_{T_{2,m}}(z)=\frac{m}{2}z$, we have  
\[
\nabla_{J(\ell, m)}(z) = \nabla_{J(0, m)} (z) - \frac{\ell}{2} z \nabla_{(T(2,-m))}(z) = 1+\frac{\ell m}{4}z^{2}.
\]
Similarly, by the skein relation of $v_{3}(K)$, we have 
\begin{align*}
v_{3}(J(\ell-2,m) - v_{3}(J(\ell,m)) &= \frac{1}{8} \left(a_{2}(J(\ell-2,m)) +a_{2}(J(\ell,m))+\frac{m^{2}}{4} \right) \\
&= \frac{1}{8}\left( \frac{(\ell-2)m}{4} + \frac{\ell m}{4}+\frac{m^{2}}{4} \right)= \frac{m}{32}(2\ell-2+m) . 
\end{align*}
Hence we obtain 
\[  v_{3}(J(\ell,m))=-\sum_{i=1}^{\frac{\ell}{2}} \frac{m}{32}(4i+m-1) =-\frac{\ell m}{64}(\ell + m). 
\] 

The computation for $P(p,q,r)$ is similar; for $p \geq 3$, by the skein relation applied at a crossing in the vertical $p$ half twists we have $\nabla_{P(p,q,r)}(z) - \nabla_{P(p-2,q,r)}(z)= z\nabla_{T(2,q+r)}(z)$
so
\[ \nabla_{P(p,q,r)}(z) = \nabla_{P(1,q,r)}(z)+\frac{(p-1)(q+r)}{4}z\]
Noting $K(p,q,r)\cong K(q,r,p) \cong K(r,p,q)$ and $K(1,1,1)$ is right-handed trefoil, we inductively compute $\nabla_{P(p,q,r)}(z)$ as
\begin{align*}
\nabla_{P(p,1,1)}(z) &= 1+z + \frac{p-1}{2}z = \frac{(p+1)}{2}z,\\
\nabla_{P(p,1,r)}(z) &= \nabla_{P(1,1,r)}(z) + \frac{(p-1)(r+1)}{4}z = 1+ \frac{(p+1)(r+1)}{4}z, 
\end{align*}
and
\begin{align*}
\nabla_{P(p,q,r)}(z) &=\nabla_{P(1,q,r)}(z)+\frac{(p-1)(q+r)}{4}z \\
&= 1+ \frac{(q+1)(r+1)}{4}z + \frac{(p-1)(q+r)}{4}z =1+\frac{pq+qr+rp+1}{4}z.
\end{align*}

As for the $v_3(P(p,q,r))$, by the skein formula we have
\[ v_{3}(P(p,q,r)) = v_{3}(P(1,q,r)) +\frac{1}{8}\sum_{i=1}^{\frac{p-1}{2}}\left( a_{2}(P(2i+1,q,r))+a_2(P(2i-1,q,r))+\frac{1}{4}(q+r)^{2}\right) \]
hence by a similar argument we have 
\begin{align*}
v_{3}(P(p,1,1)) &= \frac{1}{4}+ \frac{1}{8}\sum_{i=1}^{\frac{p-1}{2}}\left(\frac{4i+4}{4}+\frac{4i}{4}+\frac{(1+1)^{2}}{4}\right) = \frac{1}{32}(p+1)(p+3),\\
v_{3}(P(p,1,r)) & = \frac{1}{32}(r+1)(r+3)+ \frac{1}{8}\sum_{i=1}^{\frac{p-1}{2}}\left( \frac{(2i+2)(r+1)}{4}+\frac{(2i)(r+1)}{4} +\frac{(r+1)^{2}}{4}\right)\\
&= \frac{1}{64}(p+1)(r+1)(p+r+2),\\
v_{3}(P(p,q,r)) & = \frac{1}{64}(q+1)(r+1)(q+r+2)\\
& \quad + \frac{1}{8}\sum_{i=1}^{\frac{p-1}{2}}\left( \frac{(2i+1)(q+r)+qr+1}{4}+\frac{(2i-1)(q+r)+qr+1}{4} +\frac{(q+r)^{2}}{4}\right)\\
&= \frac{1}{64}\left( (p+q+r+1)(pq+qr+rp+1)+(p-1)(q-1)(r-1) \right).\\
\end{align*}
\end{proof}

We remark that for a double twist knot $J(\ell,m)$, the finite type invariant $v_{3}$ completely detects the amphicheiral property.

\begin{corollary}\label{cor:J(l,m)amphicheiral}
The knot $J(\ell,m)$ is amphicheiral if and only if $m=-\ell$.
\end{corollary}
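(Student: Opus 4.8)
The plan is to prove the two implications separately: the invariant $v_{3}$ from Proposition~\ref{proposition:computation}(1) handles the forward direction, while the elementary geometric symmetries of double twist knots handle the converse. The key input for the forward direction is the behavior of $v_{3}$ under mirror image. Writing $\overline{K}$ for the mirror of $K$, I claim $v_{3}(\overline{K}) = -v_{3}(K)$. This follows directly from the Jones-polynomial formula $v_{3}(K) = -\frac{1}{144}V'''_{K}(1) - \frac{1}{48}V''_{K}(1)$ together with $V_{\overline{K}}(t) = V_{K}(t^{-1})$: differentiating $V_{K}(t^{-1})$ and evaluating at $t=1$, using the standard identities $V_{K}(1) = 1$ and $V'_{K}(1) = 0$, one checks that $V''_{\overline{K}}(1) = V''_{K}(1)$ and $V'''_{\overline{K}}(1) = -6V''_{K}(1) - V'''_{K}(1)$, and substituting these into the formula reverses the overall sign. (This is of course consistent with $v_{3}$ being an odd-degree primitive finite type invariant taking values $\pm\frac14$ on the two trefoils.)

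For the \emph{only if} direction, suppose $J(\ell,m)$ is amphicheiral, so that $J(\ell,m) \cong \overline{J(\ell,m)}$. Since $v_{3}$ is a knot invariant insensitive to orientation, this gives $v_{3}(J(\ell,m)) = v_{3}(\overline{J(\ell,m)}) = -v_{3}(J(\ell,m))$, whence $v_{3}(J(\ell,m)) = 0$. By Proposition~\ref{proposition:computation}(1), $v_{3}(J(\ell,m)) = -\frac{\ell m}{64}(\ell+m)$, and since a nontrivial double twist knot $J(\ell,m)$ has $\ell m \neq 0$, the vanishing forces $\ell + m = 0$, i.e. $m = -\ell$.

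For the \emph{if} direction I would invoke two symmetries of double twist knots. First, mirroring reverses every crossing in the standard diagram, so $\overline{J(\ell,m)} = J(-\ell,-m)$. Second, interchanging the two twist regions is realized by an isotopy, giving $J(\ell,m) \cong J(m,\ell)$. When $m = -\ell$ these combine: $\overline{J(\ell,-\ell)} = J(-\ell,\ell) \cong J(\ell,-\ell)$, so $J(\ell,-\ell)$ is amphicheiral, as desired. (The case $\ell = 2$ recovers the amphicheiral figure-eight knot $J(2,-2)$ as a sanity check.)

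The main obstacle is the first step, namely establishing $v_{3}(\overline{K}) = -v_{3}(K)$ cleanly rather than appealing to general Vassiliev theory; the direct computation with the Jones polynomial is the most self-contained route and relies only on the elementary facts $V_{K}(1) = 1$ and $V'_{K}(1) = 0$. The converse direction is essentially a bookkeeping exercise with the standard diagrammatic symmetries, and presents no real difficulty.
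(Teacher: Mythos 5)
Your proposal is correct and follows essentially the same route as the paper: the paper also deduces $m=-\ell$ from $v_{3}(K)=-v_{3}(K!)$ together with the formula $v_{3}(J(\ell,m))=-\frac{\ell m}{64}(\ell+m)$ of Proposition~\ref{proposition:computation}, and disposes of the converse by the evident diagrammatic symmetry. You merely supply details the paper leaves implicit (the Jones-polynomial verification of $v_{3}(\overline{K})=-v_{3}(K)$ and the explicit symmetries $\overline{J(\ell,m)}=J(-\ell,-m)$, $J(\ell,m)\cong J(m,\ell)$), and your checks are accurate.
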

\begin{proof}
It can be directly seen from that $J(\ell,m)$ is amphicheiral if $m=-\ell$. 
Conversely, for an amphicheiral knot $K$, $v_{3}(K)=0$ holds, since $v_{3}(K)=-v_{3}(K!)$ for the mirror image $K!$ of $K$. 
Thus by Proposition \ref{proposition:computation}, we obtain $m=-\ell$.

\end{proof}

\begin{proof}[{Proof of Theorem~\ref{theorem:main-genus-one-alternating}}]
We show that a genus one alternating knot $K$, unless it is the trefoil or an amphicheiral knot, does not admit chirally cosmetic surgeries. \\

\underline{\textbf{Case 1:} $K=J(\ell,m), \ \ell \geq m>0$ (even)}\\

In this case $J(\ell,m)$ is a negative knot whose mirror image $J(-\ell,-m)$ is a positive knot. 
By Proposition \ref{proposition:computation} $\frac{a_{2}(J(-\ell,-m))}{v_3(J(-\ell,-m))}= \frac{16}{\ell+m}$ hence by Corollary \ref{cor:positiveknot} unless $(\ell,m)=(4,2)$ or $(\ell,m)=(2,2)$, $J(\ell,m)$ does not admit chirally cosmetic surgeries.

 The case $(\ell,m)=(2,2)$ is the trefoil so we show that $J(4,2)$ does not admit chirally cosmetic surgeries. To treat this case we use the $SL(2;\mathbb{C})$-Casson invariant and Heegaard Floer homology argument.

Assume, to the contrary that for $K=J(4,2)$ we have $S^{3}_K(p/q)\cong -S^{3}_K(p/q')$ for some $p/q\neq p/q'$. By Theorem \ref{theorem:v3} and Proposition \ref{proposition:computation}, we have $\frac{p}{q+q'}=-\frac{13}{3}$.

On the other hand, $J(2,4)$ is the $5_2$ knot and by a computation in Example \ref{exam:5_2}, we know that $p+7q=-3q'$ or $5q+5q'=-p$. If $p+7q=-3q'$, then we combining the equality $\frac{p}{q+q'}=-\frac{13}{3}$ we get $q'=2q$ so $q'$ and $q$ must have the same sign. By Theorem 5.2 (ii), this shows that $K$ is fibered, which is a contradiction. If $5q+5q'=-p$, then $\frac{p}{q+q'}=-5$ which contradicts $\frac{p}{q+q'}=- \frac{13}{3}$.\\

\underline{\textbf{Case 2:} $K=J(\ell,m), \ \ell <0 < m$ (even)}\\

We may actually assume $m \neq -\ell$, otherwise $K$ is amphicheiral by Corollary~\ref{cor:J(l,m)amphicheiral}. 
We see from Proposition~\ref{proposition:computation} that $a_{2}(K) = \frac{\ell m }{4} <0$. 
Thus, $\Delta_{K}(t)$ has no roots on the unit circle $ \{z \in \mathbb{C}\: | \: |z|=1\}$. This implies that $\sigma_{\omega}(K)=0$ for all $\omega \in \{z \in \mathbb{C}\: | \: |z|=1\} $. In particular, $\sigma(K,p)=0$ for all $p$. 

By Proposition~\ref{proposition:computation}, we have $a_2(K) \ne 0$. 
Then, by Theorem~\ref{theorem:cassonobst}, we obtain that $\frac{q+q'}{p}=0$. 
However, by Proposition \ref{proposition:computation}, $v_{3}(K)=v_{3}(J(\ell,m)) = -\frac{\ell m}{64}(\ell +m)\neq 0$. This contradicts Theorem~\ref{theorem:v3} (ii). \\

\underline{\textbf{Case 3:} $K=P(p,q,r), \ 0<p,q,r$ (odd)}\\

In this case $K$ is a positive knot and by Proposition~\ref{proposition:computation} $\frac{a_2(P(p,q,r))}{v_3(P(p,q,r))}<\frac{16}{p+q+r+1}$. By Corollary \ref{cor:positiveknot} unless $p+q+r < 7$, $P(p,q,r)$ does not admit chirally cosmetic surgeries. If $p+q+r=3$ it is the trefoil.
When $p+q+r=5$, $P(p,q,r)$ is equal to the mirror image of $J(2,4)$ which we have treated in Case 1.
\end{proof}

Our proof of Theorem~\ref{theorem:main-genus-one-alternating} demonstrates a general strategy of establishing the non-existence of chirally cosmetic surgeries for a family of knots defined by diagrams parametrized by the number of twisting, like Figure \ref{fig2}. 

In a light of Theorem \ref{theorem:larget_N}, by computing $a_2$, $a_4$ and $v_3$ we may exclude the possibility of chirally cosmetic surgery for many cases. Then we may exclude the remaining cases by using $SL(2;\mathbb{C})$-Casson invariant and Heegaard Floer homology argument.

\appendix

\section{Cosmetic surgeries on Torus knots}\label{App}

Here, based on \cite{Rong}, we give a complete classification of chirally cosmetic surgeries on torus knots in $S^3$. 
For the notation used here, see \cite{Rong} in detail. 

Let $X$ be a compact orientable Seifert fibered space with an incompressible torus boundary and orientable base orbifold. 
Let $R$ be  a cross-section of an $S^1$-bundle over a surface obtained from $X$ by removing open tubular neighborhoods of the singular fibers of the Seifert fibration. 
Set $c=R\cap \partial X$ and let $h$ be a regular fiber on $\partial X$. 
Once $c$ is fixed, a slope $\gamma$ on $\partial X$ is described by a rational number $\beta/\alpha$ 
such that $[\gamma]=\alpha[c] - \beta[h] \in H_1(\partial X, \mathbb{Z})$. 
Remark that, when $X$ is a knot complement in $S^3$, this rational number is different from that obtained by using the standard meridian-longitude system. 

\begin{theorem}{{\cite[Theorem 1]{Rong}}}\label{Rong}
Let $X$ be as above and $X(\gamma_i)$ ($i=1,2$) $3$-manifolds obtained by Dehn filling of $X$ along slopes $\gamma_1$ and $\gamma_2$ respectively. 
Suppose that $X(\gamma_1)\cong \pm X(\gamma_2)$ and there is no homeomorphism $f:X\to X$ sending $\gamma_1$ to $\gamma_2$. 
Then 
\begin{enumerate}
\item 
$X(\gamma_1)\cong -X(\gamma_2)$ and $X(\gamma_1)\not\cong +X(\gamma_2)$, and 
\item 
under some choice of the section $c$ on $\partial X$, the Seifert invariant of $X$ can be 
written as 
$$
\left\{ \epsilon,g; \frac{1}{2}, \ldots, \frac{1}{2},r_2,-r_2,\ldots,r_k,-r_k,r_1 \right\}, 
\ \mathit{where}\ r_i\not\equiv 0,\frac{1}{2}\ (\bmod\ 1). 
$$
Moreover, $\gamma_1$ and $\gamma_2$ are determined by rational numbers $-r_1+m$ and $-r_1-n-m$ with respect to the Seifert fibration, respectively, where $n$ is the number of singular fibers of type $\frac{1}{2}$ in the above and $m\ne -\frac{n}{2}$ is an integer. 
\end{enumerate}
Conversely, if $\gamma_1$ and $\gamma_2$ are as in $(2)$ above, then $X(\gamma_1)\cong -X(\gamma_2)$, $X(\gamma_1)\not\cong X(\gamma_2)$ and there is no homeomorphism $f:X\to X$ sending $\gamma_1$ to $\gamma_2$. 
\end{theorem}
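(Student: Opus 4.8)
The plan is to reduce the homeomorphism $X(\gamma_1)\cong\pm X(\gamma_2)$ to a comparison of Seifert invariants, and then to read off the stated normal form by exploiting the hypothesis that no self-homeomorphism of $X$ carries $\gamma_1$ to $\gamma_2$. First I would record the effect of Dehn filling on a Seifert fibration: provided the filling slope $\gamma_i$ is not the fiber slope $h$, the Seifert fibration of $X$ extends over the attached solid torus, so $X(\gamma_i)$ is again Seifert fibered and the core of the filling torus becomes a (possibly regular) fiber whose Seifert invariant is computed directly from the rational number describing $\gamma_i$ with respect to the section $c$. I would dispose of the finitely many degenerate fillings (the fiber slope itself, and slopes yielding reducible or non-Seifert manifolds) separately, since these cannot contribute to the generic classification.

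The central step is a rigidity argument. Because the base orbifold is orientable and $\partial X$ is incompressible, the filled manifolds $X(\gamma_i)$ are ``large'' Seifert fibered spaces, for which the Seifert fibration is unique up to isotopy, apart from the usual short list of exceptions (lens spaces, $S^1\times S^2$, small prism and Euclidean manifolds). Away from these exceptions, any homeomorphism $X(\gamma_1)\cong\pm X(\gamma_2)$ may be isotoped to be fiber-preserving, hence induces a homeomorphism of base orbifolds and permutes the singular fibers while matching their Seifert invariants. The exceptional cases I would treat by hand: there $X$ is forced to be a very restricted piece (a cable space, a twisted $I$-bundle, and so on), and the conclusion can be checked directly.

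Now comes the dichotomy that drives everything. The fiber-preserving homeomorphism sends the distinguished fiber of $X(\gamma_1)$ coming from the filling either to the corresponding filled fiber of $X(\gamma_2)$, or to some other fiber. In the first case it restricts to a homeomorphism $X\to X$ carrying $\gamma_1$ to $\gamma_2$, contradicting the hypothesis; so the filled fibers must be swapped with \emph{other} fibers. Comparing the two singular-fiber invariant lists---which differ only in the single slot occupied by the filled fiber, since the underlying $X$ is the same---and using that an orientation reversal negates every $\beta_i/\alpha_i$ together with the Euler number, I would deduce that the remaining invariants form a multiset invariant under negation mod $1$. Since $-\tfrac12\equiv\tfrac12$, the fibers of type $\tfrac12$ are self-conjugate while the rest pair up as $r_i,-r_i$, leaving a single unpaired invariant $r_1$; this is exactly the normal form in (2). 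Tracking the integer parts through the reversal then pins the two filling values to $-r_1+m$ and $-r_1-n-m$, the extra shift by $-n$ arising precisely because each of the $n$ half-integer fibers costs an integer correction ($-\tfrac12\equiv\tfrac12-1$) when brought back after negation. Part (1) follows from the same bookkeeping: the two Euler numbers come out as $\mp(\tfrac n2+m)$, which coincide only when $m=-\tfrac n2$, so an orientation-preserving identification is impossible for distinct slopes.

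For the converse I would run the computation backwards: starting from the normal form in (2), the negation map on Seifert invariants fixes the base and genus, carries each $\tfrac12$-fiber and each $\pm r_i$ pair to itself as a multiset, and sends the filling value $-r_1+m$ to $-r_1-n-m$; assembling these into an orientation-reversing fiber-preserving homeomorphism realizes $X(\gamma_1)\cong -X(\gamma_2)$, while the Euler-number computation above certifies both $X(\gamma_1)\not\cong +X(\gamma_2)$ and the absence of a self-homeomorphism of $X$ sending $\gamma_1$ to $\gamma_2$, with the condition $m\ne-\tfrac n2$ guaranteeing that the two slopes are genuinely distinct. The main obstacle I anticipate is the rigidity step together with its exceptional cases: one must rule out ``accidental'' non-fiber-preserving homeomorphisms, and then normalize the integer parts of the Seifert invariants carefully enough that the asymmetric shift $-n-m$, rather than a naive $-m$, emerges correctly from the orientation reversal.
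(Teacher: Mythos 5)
This theorem is not proved in the paper at all: it is quoted verbatim as \cite[Theorem 1]{Rong}, and the appendix only \emph{uses} it (to derive Corollary~\ref{corollary:torus-knot}), so there is no internal proof to compare against. Measured against Rong's actual published argument, your outline reconstructs it faithfully: Dehn filling along a non-fiber slope extends the Seifert fibration with the core becoming a fiber of invariant given by the filling coordinate; uniqueness of Seifert fibrations on sufficiently large manifolds (with the short list of small exceptions handled separately) makes any homeomorphism $X(\gamma_1)\cong\pm X(\gamma_2)$ fiber-preserving; the no-self-homeomorphism hypothesis forbids the filled cores from being matched to each other; and the multiset-of-invariants bookkeeping under negation, including the integer correction $-\frac12\equiv\frac12-1$ at each of the $n$ half-integer fibers and the Euler-number comparison $\mp\left(\frac{n}{2}+m\right)$, is exactly how the normal form in (2), the asymmetric shift $-n-m$, and part (1) arise. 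The converse likewise proceeds by directly realizing the orientation-reversing fiber-preserving homeomorphism and using the Euler number to certify $X(\gamma_1)\not\cong X(\gamma_2)$ and the absence of the self-homeomorphism.

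One subtlety your dichotomy glosses over: ruling out the filled cores being matched to each other does not by itself force the remaining invariants to be ``negation-symmetric minus one slot.'' The multiset equation $S\cup\{v_1\}=(-S)\cup\{-v_2\}\pmod 1$ has a second solution branch, namely $S$ fully negation-symmetric with $v_2\equiv -v_1\pmod 1$ (and $v_2=-v_1-n$ after the Euler-number match). In that case no unpaired $r_1$ exists and the stated normal form fails; this branch must be killed by a separate construction, not by the core-swapping observation: a fibration with negation-symmetric invariants admits a fiber-orientation-reversing self-homeomorphism of $X$ itself, which on $\partial X$ sends the slope coordinate $v\mapsto -v-n$ and hence carries $\gamma_1$ to $\gamma_2$, contradicting the hypothesis. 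You need to exhibit that self-homeomorphism explicitly (and similarly, in the converse, classify all self-homeomorphisms of $X$ --- fiber-preserving, with the fiber-orientation-reversing ones excluded because $r_1\not\equiv -r_1$ --- to certify the last clause). With that case added, and with the acknowledged hand-treatment of the small Seifert exceptions and of fillings along the fiber slope, your plan is a correct reconstruction of Rong's proof.
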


\begin{corollary}\label{corollary:torus-knot}
Let $T_{r,s}$ be the $(r,s)$-torus knot. 
If $T_{r,s}(p/q)\cong \pm T_{r,s}(p/q')$, then $T_{r,s}(p/q)\cong -T_{r,s}(p/q')$, $s=2$ and 
$$
p/q=\frac{2r^2(2m+1)}{r(2m+1)+1}, \hspace{.5cm} 
p/q'=\frac{2r^2(2m+1)}{r(2m+1)-1} 
$$
for a positive integer $m$. 
Conversely, if $p/q$ and $p/q'$ are such rational numbers, then $T_{r,2}(p/q)\cong -T_{r,2}(p/q')$ for any odd integer $r\ge 3$. 
\end{corollary}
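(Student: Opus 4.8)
The plan is to specialize Rong's classification (Theorem~\ref{Rong}) to the exterior $X=E(T_{r,s})$. This $X$ is a Seifert fibered space over the disk with incompressible torus boundary and orientable base, carrying exactly two exceptional fibers, of orders $r$ and $s$; thus it satisfies the standing hypotheses on $X$. Since $r$ and $s$ are coprime, at most one exceptional fiber has order $2$, and, $T_{r,s}$ being nontrivial, the Seifert fibration is unique up to isotopy, so every self-homeomorphism of $X$ preserves it. Writing the regular fiber on $\partial X$ as $[h]=rs[\mu]+[\lambda]$, any self-homeomorphism fixes $[\mu]$ and $[h]$ up to sign (the meridian because it is the unique $S^3$-filling slope, by Gordon--Luecke), so on slopes it acts either trivially or by the involution $p/q\mapsto 2rs-p/q$. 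A one-line computation shows this involution preserves the numerator $p$ only for the meridian and the fiber slope, both excluded; hence for a pair of honest distinct slopes $p/q\neq p/q'$ of the same numerator no self-homeomorphism sends one to the other, and Theorem~\ref{Rong} applies. Its part~(1) then immediately yields $T_{r,s}(p/q)\cong -T_{r,s}(p/q')$ and $\not\cong +T_{r,s}(p/q')$, the first assertion of the corollary.

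Next I would read off $s=2$ from part~(2). The normal form lists $n$ fibers of type $\tfrac12$, then $k-1$ cancelling pairs $r_i,-r_i$, and a final $r_1$, for a total of $n+2k-1$ genuine exceptional fibers (each $r_i\not\equiv 0,\tfrac12 \pmod 1$). Equating with the two exceptional fibers of $X$ forces $n+2k-1=2$, whose only solution with $k\ge 1$ is $n=1$, $k=1$. Thus $X$ has exactly one fiber of order $2$ and one of order $\ge 3$, so $s=2$, $r$ is odd with $r\ge 3$, and $r_1=b_1/r$ is the Seifert invariant of the order-$r$ fiber.

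The computational heart is translating Rong's slope data into the meridian--longitude framing. By part~(2) the two slopes are $-r_1+m$ and $-r_1-n-m=-r_1-1-m$ in the fibration framing $\beta/\alpha\leftrightarrow\alpha[c]-\beta[h]$. I would combine $[h]=2r[\mu]+[\lambda]$ with the section relation $[\mu]=[c]+E[h]$ (the integer $E$ recording the chosen section, so that $\mu$ has integral Seifert value), write out the resulting change of basis, substitute the two Rong values, and simplify. The clean identity $p-2rq=\alpha$ for a Seifert value $\beta/\alpha$ shows that both resulting slopes satisfy $|p-2rq|=r$, matching Moser's description of $T_{r,2}$ surgeries; the two opposite signs of $p-2rq=\pm r$ (arising after normalizing $p\ge 0$) encode the orientation reversal and distinguish the two denominators. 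Carrying out the algebra should give $p/q=2r^2(2m+1)/(r(2m+1)+1)$ and $p/q'=2r^2(2m+1)/(r(2m+1)-1)$. The converse is then obtained by running this translation backward: slopes of this form have fibration values $-r_1+m$ and $-r_1-1-m$, so the converse half of Theorem~\ref{Rong} gives $T_{r,2}(p/q)\cong -T_{r,2}(p/q')$, $\not\cong +T_{r,2}(p/q')$, with no symmetry relating the slopes, for every odd $r\ge 3$.

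I expect the main obstacle to be precisely this last translation: determining $b_1$ (equivalently $r_1$) relative to the particular section that realizes Rong's normal form, and keeping the orientation and sign conventions consistent so that the two slopes emerge with common numerator $2r^2(2m+1)$ and denominators differing exactly by the sign of $p-2rq$. Since $p-2rq=\alpha$ is forced to be $+r$ before the sign-normalization $p\ge 0$, the genuine $\pm r$ split — and with it the orientation-reversing nature of the pair — lives entirely in this normalization step, which is where the bookkeeping is most delicate and error-prone.
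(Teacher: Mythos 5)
You follow essentially the same route as the paper: specialize Rong's Theorem~\ref{Rong} to $X=E(T_{r,s})$, match the two exceptional fibers of $X$ against the normal form (your count $n+2k-1=2$ forcing $n=1$, $k=1$) to get $s=2$ and $r\ge 3$ odd, translate Rong's slopes $-r_1+m$ and $-r_1-1-m$ into meridian--longitude coordinates by a change of basis in $H_1(\partial E(K);\mathbb{Z})$, and obtain the converse from the converse half of Theorem~\ref{Rong}. Your outline is correct, and one ingredient is a genuine improvement on the paper: the paper applies Theorem~\ref{Rong} without checking its hypothesis that no self-homeomorphism of $X$ carries $\gamma_1$ to $\gamma_2$, whereas you verify it --- uniqueness of the Seifert fibration plus Gordon--Luecke force $f_*\mu=\pm\mu$ and $f_*h=\pm h$, so $f$ acts on slopes by the identity or by $p/q\mapsto 2rs-p/q$, and the latter preserves the numerator only at $1/0$ and $rs/1$, neither of which can occur for distinct slopes with common numerator.

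The only substantive shortfall is that you defer (``carrying out the algebra should give'') exactly the computation that constitutes the paper's proof, including the determination of the section constant you correctly identify as the delicate point. The paper disposes of it by fixing the section at the outset: choose $c$ with $[c]=(rs-1)[\mu]+[\lambda]$ and $[h]=-rs[\mu]-[\lambda]$, so that the Seifert invariant of $E(K)$ is $\{o_1,0;\,t/r,\,u/s\}$ with $ru-ts=1$; Rong's normal form then forces $u/s=1/2$, hence $s=2$, $u=1$, $t=(r-1)/2$, i.e.\ $r_1=(r-1)/(2r)$, and expanding $2r[c]-(1-r+2rm)[h]$ and $2r[c]-(1-3r-2rm)[h]$ in the basis $[\mu],[\lambda]$ yields the displayed fractions mechanically, with no residual ambiguity. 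One bookkeeping caution confirming your sanity check: the identity $p-2rq=\pm r$ holds for the slopes in lowest terms, while the fractions as displayed in the corollary are not reduced (numerator and both denominators are even, since $r$ and $2m+1$ are odd), and for the unreduced forms one gets $p-2rq=\mp 2r$; this factor of $2$ is precisely the kind of slip your plan needs to guard against, though as stated your check is consistent.
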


\begin{proof}
Let $E(K)$ be the exterior of the $(r,s)$-torus knot $K=T_{r,s}$.
With respect to the standard meridian-longitude system, the slope of the regular fiber $h$ of the Seifert fibration of $E(K)$ is described as $rs \in \mathbb{Q}$. 
According to the sign convention in \cite{Rong}, we have $[h]=-(rs)[\mu] - [\lambda]$ in $H_{1}(\partial E(K); \mathbb{Z})$. 
We take a cross-section $R$ so that $c$ represents the slope $rs-1$ with respect to the meridian-longitude system. 
Precisely, we take $R$ so that $[c]=(rs-1)[\mu] +[\lambda]$ in $H_{1}(\partial E(K); \mathbb{Z})$ for the meridional slope $\mu$ and the preferred longitudinal slope $\lambda$. 
Then the Seifert invariant of $E(K)$ is obtained as 
$\{ o_1,0; \frac{t}{r},\frac{u}{s}\}$, where $t,u\in \mathbb{Z}$ are integers taken so that $ru-ts=1$. 

Hence it follows from Theorem \ref{Rong} that if $T_{r,s}(p/q)\cong T_{r,s}(p/q')$, then $s=2, u=1$. 
Consequently 
its Seifert invariant is $\{ o_1,0; \frac{r-1}{2r},\frac{1}{2}\}$, and the surgery slope is described as $\gamma_1=-\frac{r-1}{2r}+m$ and $\gamma_2=-\frac{r-1}{2r}-1-m$ with $m \in \mathbb{Z}$ with respect to the Seifert fibration. 

Let $\gamma_1 = p/q$ and $\gamma_2=p/q'$ with respect to the standard meridian-longitude system. 
Then we have
\begin{align*}
\begin{cases}
2r[c]-(1-r+2rm)[h]=p[\mu]+q[\lambda]\\
2r[c]-(1-3r-2rm)[h]=p[\mu]+q'[\lambda] , 
\end{cases} 
\end{align*}
and so, 
\begin{align*}
\begin{cases}
(2r(2r-1)+(1-r+2rm)2r)[\lambda]+ (2r+(1-r+2rm))[\mu] =p[\lambda]+q[\mu]\\
(2r(2r-1)+(1-3r-2rm)2r)[\lambda] +(2r+(1-3r-2rm))[\mu]=p[\lambda]+q'[\mu]. 
\end{cases} 
\end{align*}
Therefore we conclude
\[ p/q=\frac{2r^2(2m+1)}{r(2m+1)+1}, \ 
p/q'=\frac{2r^2(2m+1)}{r(2m+1)-1}.  \]
as desired.
\end{proof}

\section*{Acknowledgements}
Ichihara is partially supported by JSPS KAKENHI Grant Number 26400100. 
Ito is partially supported by JSPS KAKENHI Grant Numbers 15K17540, 16H02145. 
Saito is partially supported by JSPS KAKENHI Grant Number 15K04869.

\end{document}